\documentclass[reqno]{amsart}


\usepackage{cite}
\usepackage{a4wide,calrsfs}
\usepackage{amsmath,bbm}
\usepackage{color}
\usepackage{epsfig}
\usepackage{subfigure}
\usepackage{amssymb}
\usepackage{enumerate}
\usepackage{graphicx}
\usepackage{subfigure}
\usepackage{amsmath}
\usepackage{rotating}
\usepackage{stmaryrd}
\usepackage{upgreek}  
\usepackage{amsthm}
\usepackage{amsopn}
\usepackage{upgreek}


\newcommand{\R}{\mathbb R}

\newcommand{\Z}{\mathbb{Z}}

\newcommand{\C}{\mathbb C}

\begin{document}
\title[Dichotomies and  asymptotic equivalence]
{Dichotomies and  asymptotic equivalence in alternately advanced and
delayed differential systems}

\author[A. Coronel  \& C. Maul\'en \& M. Pinto \& D. Sep\'ulveda]
{An{\'\i}bal Coronel \and Christopher Maul\'en\and  Manuel Pinto \and  Daniel Sep\'ulveda}  

\address{An\'ibal Coronel \newline
GMA, Departamento de Ciencias B\'asicas, Facultad de Ciencias, 
Universidad del B\'{\i}o-B\'{\i}o, Campus Fernando May, Chill\'{a}n, Chile.}
\email{acoronel@ubiobio.cl}

\address{Christopher Maul\'en \newline
Departamento de Matem\'aticas, Facultad de Ciencias, Universidad de Chile}
\email{christoph.maulen.math@gmail.cl}

\address{Manuel Pinto \newline
Departamento de Matem\'aticas, Facultad de Ciencias, Universidad de Chile}
\email{pintoj.uchile@gmail.cl}

\address{Daniel Sep\'ulveda \newline
Escuela de Matem\'aticas y Estad\'isticas, Universidad Central de Chile}
\email{daniel.sep.oe@gmail.cl}

\date{\today}
\thanks{}
\keywords{dichotomies;
	asymptotic equivalence;
	piecewise constant arguments; 
	Cauchy and Green matrices; 
	hybrid equations; 
	stability of solutions; 
	Gronwall's inequality
}

\begin{abstract}
In this paper, 
ordinary and exponential dichotomies are 
defined in differential equations with equations with 
piecewise constant argument of general type. We prove
the asymptotic equivalence between the bounded solutions
of a linear system and a perturbed system
with integrable and bounded perturbations.
\end{abstract}
\maketitle

\numberwithin{equation}{section}
\newtheorem{thm}{Theorem}[section]
\newtheorem{lem}[thm]{Lemma}
\newtheorem{prop}[thm]{Proposition}
\newtheorem{cor}[thm]{Corollary}
\newtheorem{defn}{Definition}[section]
\newtheorem{conj}{Conjecture}[section]
\newtheorem{exam}{Example}[section]
\newtheorem{rem}{Remark}[section]
\allowdisplaybreaks


\section{Introduction}

The study of differential equations with piecewise constant argument  is motivated 
by several applications coming from different fields of science 
and by their own mathematical definition as  hybrid dynamical systems
\cite{2,Busenberg2,24}. For a longer discussion on 
applications  consult the references \cite{5,9,16,17,21,27,28}.
Here the meaning of hybrid is given in the sense that they combine the behavior
of differential and difference equations. 
In general, the typical 
form of this kind of equations is given by 
the following functional equation
\begin{eqnarray}
 \mathbf{x}'(t)=\mathbf{F}(t,\mathbf{x}(t),\mathbf{x}(\gamma(t))),
 \label{eq:eq_generica}
\end{eqnarray}
where $\mathbf{x}:\R\to \C^p$  is the unknown function,
$t\in\R$ usually denotes the time, $\mathbf{F}$
is a given function from $\R\times\C^p\times\C^p$ to $\C^p$,
and $\gamma$ is a given general step function 
in the sense that
\begin{eqnarray}
\left.
\begin{array}{l}
\mbox{$\gamma:\R\to\R$ is defined by $\gamma(t)=\zeta_i$ for $t\in I_i=[t_i,t_{i+1})$,
 where $\{t_i\}_{i\in \mathbb{Z}}$ and $\{\zeta_i\}_{i\in \mathbb{Z}}$}\\
 \mbox{are two given (fix) sequences such that $t_i\leq\zeta_i\leq t_{i+1}$
 with $t_i<t_{i+1}$ for each $i\in \mathbb{Z}$}\\
 \mbox{and $t_i\rightarrow \pm\infty $ when $i\rightarrow \pm\infty$.}
\end{array}
\right\}
\label{eq:genstepf}
\end{eqnarray}
The genesis of the study of this kind of functional equations 
goes back to the work of Myshkis~\cite{22}, who  proposed 
an equation of type \eqref{eq:eq_generica}-\eqref{eq:genstepf}
with  the  particular step functions
$\gamma(t)=[t]$ and $\gamma(t)=2\left[ (t+1)/2 \right]$.
Here $[\cdot]$ denotes the greatest
integer function. By simplicity of the presentation, we use  
hereinafter the terms DEPCA 
and DEPCAG to refer the 
differential equations with piecewise constant argument
when the step function is based on the greatest
integer function  and when the step function is of the general type given
by \eqref{eq:genstepf}, respectively.
In particular, note that the equations studied by  Myshkis
are DEPCAs.
Later, in the early 80's,
a systemic analysis of \eqref{eq:eq_generica}-\eqref{eq:genstepf} 
was introduced
by Wiener and collaborators,
see \cite{1,11,29,30} and references therein.
Afterwards, the contribution to the development of 
the theory was given by many authors see for instance
\cite{3,4,5,6,9,12,16,17,18,20,24,25,27,29,30,31,32,33,34}.
Nowadays, there exist 
an intense and increasing interest to understand the qualitative 
behavior, to get novel applications and to solve numerically
the equations, since a general theory
for \eqref{eq:eq_generica}-\eqref{eq:genstepf} is far to be closed,
see \cite{5,akhmet_Bneural,castilloEJDE,chavez0,chavez,10,cnn_chiu,26,Veloz}.

An important point to observe  is the notion of the solution
or more generally the types of approach to analyze  
\eqref{eq:eq_generica}-\eqref{eq:genstepf}. 
Actually, generally speaking,
the notion of solution for functional differential 
equations is one of the most important tasks.
Now, we recall that the original notion of 
the integration (or solution) of a DEPCA was introduced 
in \cite{1,11,30} and 
is based on 
the reduction to discrete equations. This approach presents
some disadvantages when we require a generalization to 
analyze DEPCAGs. 
In particular, for instance to solve the Cauchy problem
requires that the initial moments must be integers, 
see  \cite{30} for details. Another approach to study
a general quasilinear DEPCAG was introduced
by  Akhmet \cite{3,4}, and is based 
on the construction of an equivalent  integral equation
and remarking the clear influence of the discrete part. 
The methodology of Akhmet permits to overpass the 
difficulties of the methodology of Wiener
and collaborators. Moreover, 
Akhmet adapt the notion of solution given by Wiener
and used previously by Papaschinopoulos 
to study a particular type of DEPCA, see \cite{24}.
The notion of Akhmet solution is given in terms
of continuity of the solution on each $t_i$,
the existence of the derivatives on each $t$ with
possible exception of some $t_i$ and the local
satisfaction of the equation, see definition~\ref{def_solution}
below. Then, 
in spite of  its functional character
a quasilinear DEPCAG has
similar properties to ordinary differential equations.
For further details, consult for instance \cite{7,15,16,18,19,26}. 
Here, in this paper, we use the approach of Akhmet.
Thus, we do not need to impose any restrictions on
the discrete equations
and we assume more easily verifiable conditions on the
coefficients,
similar to those for ordinary differential equations.


In this paper, we are interested in the
asymptotic equivalence of some DEPCAGs. Now,
in order to precise the different type of systems
which will be used in the paper,
we introduce a particular notation of each case.
Indeed, throughout  the paper,
we consider that  $x,z,u,y,w,v$ satisfies
the following particular cases of \eqref{eq:eq_generica}-\eqref{eq:genstepf}:
\begin{eqnarray}
x'(t)&=&A(t)x(t),\label{A}\\
z'(t)&=&A(t)z(t)+B(t)z(\gamma(t)),\label{depca_lineal}\\
u'(t)&=&B(t)u(\gamma(t))\label{B}
\\
y'(t)&=& A(t)y(t)+B(t)y(\gamma(t))+g(t),\label{depca_lineal+g}\\
w'(t)&=& A(t)w(t)+B(t)w(\gamma(t))+f(t,w(t),w(\gamma(t))). 
\label{depca_lineal+f}
\\
v'(t)& =& A(t)v(t)+B(t)v(\gamma(t))+g(t)+f(t,v(t),v(\gamma(t))).
\label{depca_g+f}
\end{eqnarray}
Note that \eqref{A}-\eqref{depca_lineal+g} are linear and 
\eqref{depca_lineal+f}-\eqref{depca_g+f}
are  nonlinear. The specific  hypotheses 
about the different functions given on 
\eqref{A}-\eqref{depca_g+f}
are summarized
on subsection~\ref{sect:assump}.

We have found some previous results on the literature
and specifically focused on the analysis DEPCAs of 
DEPCAGs of types \eqref{depca_lineal}-\eqref{depca_lineal+f}.
Particularly, here we comment the works of Akhmet~\cite{3,4}
and Pinto~\cite{cauchy_and_green}, since they are more
close to our contributions. Indeed,
Akhmet \cite{3,4}, by applying, his approach
has obtained fundamental
results about the variation of constants 
formula and the stability
of the perturbed system \eqref{depca_lineal+f}. 
Now, Pinto  in \cite{cauchy_and_green}, by applying
a combined methodology based on the 
Green matrix and Akhmet approaches,
has proven 
some important results
related to the analysis of the DEPCAGs \eqref{depca_lineal}-\eqref{depca_lineal+f}.
In particular, he defines an appropriate Green matrix
associated to  \eqref{depca_lineal}, then formulated the solution of 
\eqref{depca_lineal+g} in terms of this Green matrix and 
subsequently characterize the solution of  \eqref{depca_lineal+f}
by an integral equation of the first kind.  Using the  
integral equation and a Gronwall type inequality for DEPCAGs,
he deduces an existence and uniqueness of solutions for 
\eqref{depca_lineal+g}.
Moreover, using this approach he guarantees that the
zero solution of \eqref{depca_lineal} is exponentially asymptotically 
stable. Finally,  he  gives some equivalence results on stability
and compare his results with the corresponding ones of Akhmet~\cite{3,4}.
Probably, the  major advantages of the approach introduced
in \cite{cauchy_and_green} are two. Firstly, permits the introduction 
of more general hypotheses on the coefficients.
Second, the Green matrix  type taking account of the decomposition 
of any  interval, $I_i=[t_i,t_{i+1})$
in the advanced 
intervals $I_i^{+}=[t_i,\zeta_i]$ and the delayed intervals
$I_i^{-}=[\zeta_i,t_{i+1})$, hence permits to 
analyze the alternately advanced and
delayed differential systems in a unified way.

The paper is organized as follows. In section~\ref{sec:preli}, 
we introduce several preliminary concepts
and results like the hypotheses,
the Cauchy and Green matrices type, the notion of solution
for DEPCAGs, the Gronwall type inequality for DEPCAGs
and the notion of dichotomies and stability.
Moreover, in section~\ref{sec:preli} we give a detailed example
for \eqref{depca_lineal} with $A$ and $B$ constants.
In sections~\ref{sec:dos}-\ref{sec:cuatro}, we present the main results of the paper
which are summarized as follows
\begin{enumerate}[(1)]
 \item {\it Stability of solutions for \eqref{depca_lineal+f}.}
 In Theorem~\ref{thm:Stabildepca_linel} we prove that 
 the $\sigma$-exponentially stability of  
 the zero solution for the linear DEPCAG \eqref{depca_lineal}
 implies  $\sigma_0$-exponentially stability of
 the zero solution for \eqref{depca_lineal+f},
 where $\sigma_0$ is defined in terms of $\sigma,A,f,\{t_i\}_{i\in\Z}$ and 
 $\{\zeta_i\}_{i\in\Z}$.
 
 \item {\it Bounded solutions for \eqref{depca_lineal+g}.} In 
 Proposition~\ref{prop_45} we prove that 
 $\sigma$-exponentially stability of \eqref{depca_lineal+g}
 and a convergence of a series given in terms of the
 fundamental solutions for \eqref{A} and \eqref{depca_lineal}
 implies that equation \eqref{depca_lineal+g} has a unique bounded 
 solution on  $\R$. Moreover, in Theorem~\ref{theo_sigma_exp_unique_sol}
 we prove that there exists  a unique bounded solution  
 of the non-homogeneous linear DEPCAG \eqref{depca_lineal+g}
 by assuming the linear DEPCAG \eqref{depca_lineal} 
 has a $\sigma$-exponential dichotomy and 
 two series in terms of the
 fundamental solutions for \eqref{A} and \eqref{depca_lineal}
 and the  associated projection to the dichotomy converges,
 see \cite{akhmet_Bneural} for other results. In \cite{akhmet_Bneural},
 periodic and almost periodic solutions are also studied.

 \item {\it Asymptotic equivalence of \eqref{depca_lineal+g} and \eqref{depca_g+f}}.
 In Theorem~\ref{thm_L_fintegrable}, we prove that if the linear system 
 \eqref{depca_lineal} has an ordinary dichotomy
 and in \eqref{depca_g+f} $f$ is integrable, 
 then  there exists a homeomorphism
between the bounded solutions of \eqref{depca_lineal+g} and
the bounded solutions of \eqref{depca_g+f}.

 \item {\it Bounded solutions for \eqref{depca_lineal+f}}.
 In Theorem~\ref{teo:dsolgendecag} we prove that:
  if \eqref{depca_lineal} has a $\sigma$-exponential dichotomy 
  satisfying the requirements of Theorem~\ref{theo_sigma_exp_unique_sol},
  then for any $\xi\in P\C^{p}$ 
the nonlinear equation \eqref{depca_lineal+f} has
a unique bounded solution $w$ on $[t_0,\infty)$,
with  $Pw(t_0)=\xi$, the correspondence $\xi\to w$
is continuous and the bounded solution of equation \eqref{depca_lineal+f}
  converges exponentially to zero as $t\to\infty$.
\end{enumerate}
Moreover, we  prove introduce four corollaries with
particular importance in Corollary~\ref{eq:homeo_cp}
we deduce that here exists a homeomorphism between $\C^p$ and 
the bounded solutions of \eqref{B}.

\section{Preliminaries}
\label{sec:preli}

\subsection{General and stability assumptions and notation} 
\label{sect:assump}
$ $

In this section we summarize several hypotheses used thorough of the
paper. We organize these assumptions in two big groups
(H1)-(H4) and (S1)-(S3). The distinction obeys fundamentally to 
the fact that (S1)-(S3) are frequently 
needed by the exponential stability
results. 

The first group is given as follows
\begin{enumerate}
\item[(H1)] Let us denote by $\C^{n\times m}$ the vectorial space 
of complex matrices of size $n\times m$. We assume that the coefficients
of \eqref{A}-\eqref{depca_lineal+f} defined by the functions
$A,B:\R\to \C^{p\times p}$ and $g:\R\to \C^{p}$ 
are locally integrable in $\mathbb{R}$. 

\item[(H2)] For an arbitrary matrix valued function $M \in L^1_{\rm loc}(\R;\C^{p\times p})$
and for each $i\in\mathbb{Z}$ consider the following notation
$\rho_i(M)=\rho_i^{+}(M)\rho_i^{-}(M)$ with 
$\rho_i^{\pm}(M)=\exp\left(\int_{I^{\pm}_i}\vert M(s)\vert ds \right),$
where $\{I^+_i,I^-_i\}$ is a partition of $I_i$ defined as follows
\begin{eqnarray}
\mbox{$I^+_i=[t_i,\gamma(t_i)]=[t_i,\zeta_i]$ and 
$ I^{-}_i=(\gamma(t_i),t_{i+1})=(\zeta_i,t_{i+1})$.}
\label{eq:adv_del_int}
\end{eqnarray}
The sets $I^+_i$ and $I^-_i$ are so called the advanced and delayed
intervals, respectively. We suppose that 
the functions $A$ and $B$ on equations \eqref{A}-\eqref{depca_lineal+f}
satisfy the following relations 
\begin{eqnarray}
\rho(A)=\sup_{i\in \mathbb{Z}}\rho_i(A)<\infty,\ \
\nu_i^{+}(B)\leq \nu^{+}<1,\ \nu_i^{-}(B)\leq \nu^{-}<1,
\label{rho_A}
\end{eqnarray}
where $\nu_{i}^{\pm}=\rho_{i}^{\pm}(A) \ln (\rho_i^{\pm}(B)) $.

\item[(H3)] We assume that the function $f$
given as the third term on the right hand of the equation
\eqref{depca_lineal+f} satisfies the following three requirements:
(i) $f:\mathbb{R}\times \mathbb{C}^p\times \mathbb{C}^p\to\mathbb{C}^p $
is a continuous function, i.e. 
$f\in C(\mathbb{R}\times \mathbb{C}^p\times \mathbb{C}^p,\mathbb{C}^p)$;
(ii) $f(t,0,0)=0$ for each $t\in \mathbb{R}$; and
(iii) there exists $\eta\in L^1_{\rm loc}(\R;[0,\infty))$
such for all $t\in\mathbb{R}$ and all $\ x_i,y_i
\in\mathbb{C}^p,i=1,2,$  the inequality
\begin{eqnarray}
\Big\vert f(t,x_1,y_1)-f(t,x_2,y_2) \Big\vert
\leq \eta(t)\Big(\vert x_1-x_2 \vert +\vert y_1-y_2 \vert \Big)
\end{eqnarray}
holds.

\item[(H4)] Let $X(t)$ a fundamental matrix of solutions for \eqref{A}
and denote by $\Phi$ the also called fundamental matrix of \eqref{A}
and defined by $\Phi(t,s)=X(t)X^{-1}(s)$  for $(t,s)\in \R^2$. Consider
that $J:\R^2\to \C^{p\times p}$ is defined as follows
\begin{eqnarray}
J(t,\tau)=I_p+\int_{\tau}^{t}\Phi(\tau,s)B(s)ds,
\quad
\mbox{where $I_p$ is the $p\times p$ identity matrix}.
\end{eqnarray}
For each $i\in\Z$, we assume that 
the matrix $J(t,\tau)$ is non-singular
for all $t,\tau\in [t_i,t_{i+1}) $. 
\end{enumerate}
Here we introduce three comments about  (H1)-(H4): 
{\it (a)} note that when $A$ and $B$ are constant, the relation \eqref{rho_A}
is strongly simplified and naturally a simpler condition can be obtained;
{\it (b)} the matrix $\Phi$ defined on (H4) satisfy the following properties:
\begin{eqnarray}
\Phi(\tau,\tau)=I_p,\quad
\Phi(t,s)\Phi(s,\tau)=\Phi(t,\tau),\quad
\Phi(t,s)=\Phi^{-1}(s,t),\quad
\forall (s,t,\tau)\in\R^3;
\label{eq:propPhi}
\end{eqnarray}
and {\it (c)} (H2) implies (H4) (see Lemma~\ref{lema_cotas}).

Now, we introduce the following second group of hypotheses
\begin{enumerate}
\item[(S1)] For a given $t\in\R$
denote by $i(t)$ the unique integer such that $t\in I_{i(t)}$.
We assume that estimate
\begin{eqnarray*}
\sup_{i(t)\in\mathbb{Z}}\;\sup_{t\in I_{i(t)}}\vert Z(t,t_{i(t)})\vert <\infty
\end{eqnarray*}
is satisfied by  $Z(t,s)$ the fundamental solution of \eqref{depca_lineal}
(see subsection~\ref{subsec:FSdepca_lineal}).

\item[(S2)] $\displaystyle\inf_{i\in\Z}\{t_{i+1}-t_{i}\}>0$

\item[(S3)] The lengths of the $I^\pm_i$ satisfies the following
bound
\begin{eqnarray}
\overline{t}=\sup_{i\in\mathbb{Z}}\max
\Big\{\zeta_i-t_i,t_{i+1}-\zeta_i\Big\}<\infty
\label{eq:deftline}
\end{eqnarray}
\end{enumerate}
We note four facts: {\it (a)} (S1) is more general than (H2),
since (H2) implies (S1) and condition (S2);
{\it (a)} (S2) implies (S3);  {\it (c)} 
for $A$ and $B$ constant matrices, 
the conditions (S2) and (S3) imply (S1);
and {\it (d)} 
nowadays the most studied DEPCAGs 
satisfy (S3), see for instance \cite{1,3,6,11,12,30}.

\subsection{Notion of solution for DEPCAGs} $ $

We state the notion of solutions for DEPCAG by 
following the given \cite{1,10,11,12,30}. More precisely 
we have the following definition.

\begin{defn}
\label{def_solution}
Consider a continuous function $F:I\times\C^p\times\C^p\to\times\C^p$
where $I\subset\R$ and $\gamma$ is a general step function 
in the sense of \eqref{eq:genstepf}. Then a function $\mathbf{v}:I\to \C^p$ such that
$t\mapsto \mathbf{v}(t)$ is called a solution of the following DEPCAG
\begin{eqnarray}
 \mathbf{v}'(t)=F(t,\mathbf{v}(t),\mathbf{v}(\gamma(t)))
 \label{eq:depacag_gnl}
\end{eqnarray}
on an interval $I$ if:
\begin{enumerate}[(a)]
\item $\mathbf{v}$ is continuous on  $I$;

\item the derivative $\mathbf{v}'(t)$ exists at each point $t\in I$ 
with the possible exception of the points $t_i \in I,i\in \mathbb{Z}$, 
where the one sided derivatives exist; and

\item $\mathbf{v}$ satisfies pointwise
the equation \eqref{eq:depacag_gnl} on each interval 
$(t_i,t_{i+1})\subset I, i\in\mathbb{Z}$, 
and \eqref{eq:depacag_gnl} holds for the right derivative of $\mathbf{v}$ 
at the points $t_i\in I, i \in \mathbb{Z}$.
\end{enumerate}
\end{defn}

Naturally, the  definition~\ref{def_solution} can be applied to precise the  
notion of solution for the DEPCAG of types 
\eqref{depca_lineal}-\eqref{depca_lineal+f} by considering that 
$F$ has a particular definition in each case.

\subsection{The fundamental matrix of solutions of \eqref{depca_lineal}}
\label{subsec:FSdepca_lineal}
$ $

In several parts of the paper we need  
the fundamental matrix of solutions of \eqref{depca_lineal}.
For instance, to define the Green matrices 
the fundamental matrix of solutions
of DEPCAG \eqref{depca_lineal} is a central concept. Now,
we recall that, under the
assumption (H4), there exists 
$Z(t,s)\in \C^{p\times p}$ a fundamental matrix of solutions
of DEPCAG \eqref{depca_lineal}, see for instance \cite{4,27}.
Indeed, to construct $Z$ we proceed in two steps.
Firstly, we define $Z(t,\tau)$ for $t\ge \tau$ and then for $t\le \tau$.
Let us consider that $(t,\tau)\in [t_j,t_{j+1})\times [t_i,t_{i+1})$ with 
$t>\tau$ and $j\ge i$.
By induction we can deduce that
\begin{eqnarray}
Z(t,\tau)
&=&E(t,\gamma(t_j)) E(t_j ,\gamma(t_j))^{-1} \nonumber\\
& & \times  \prod_{k=j}^{i+2} 
\Big(E(t_k,\gamma(t_{k-1}))E(t_{k-1},\gamma(t_{k-1}))^{-1}\Big)
\; E(t_{i+1},\gamma(\tau))E(\tau,\gamma(\tau))^{-1},
\nonumber\\
&=& E(t,\zeta_j) E(t_j ,\zeta_j)^{-1}
\nonumber\\
&&\times\prod_{k=j}^{i+2} 
\Big(E(t_k,\zeta_{k-1})E(t_{k-1},\zeta_{k-1})^{-1}\Big)
 E(t_{i+1},\zeta_i)E(\tau,\zeta_i)^{-1},
\quad\mbox{for } t\ge \tau,
\label{Z_E}
\end{eqnarray}
where $\prod$ denotes the backward product
$\displaystyle\prod_{k=m}^{n}C_{k}=C_{n}C_{n-1}\cdots C_{m}$ 
and $E$ is the matrix defined as follows 
\begin{eqnarray}
 E(t,\tau)=\Phi(t,\tau) J(t,\tau) 
 \quad
 \mbox{with $\Phi$ and $J$ defined on (H4).}
 \label{eq:def_of_E}
\end{eqnarray}
Note that the property of no-singularity of the matrix $J(\cdot,\zeta_i)$ on the
interval $I_i$ is the minimal requirement for the well definition of $Z$ given
on \eqref{Z_E}. 
Now, to define $Z(t,\tau)$
for $t<\tau$ we note that
$Z(t,\tau)$ defined by \eqref{Z_E} satisfies similar properties to
\eqref{eq:propPhi}, since we can easily deduce that
\begin{eqnarray}
Z(\tau,\tau)&=&I_p,\hspace{1.9cm}\mbox{for $\tau\in\R$,}\\
Z(t,s)Z(s,\tau)&=&Z(t,\tau), \hspace{1.2cm}\mbox{for } \tau\geq s\geq t, 
\label{Z_multiplicative}\\
Z(t,\tau)&=&[Z(\tau,t)]^{-1},
\qquad\mbox{for } t\le \tau.
\label{Z_inverse}
\end{eqnarray}
In particular, the property \eqref{Z_inverse}  is important for our purpose 
since it allows to define $Z(t,\tau)$ for $ t<\tau$ using \eqref{Z_E}.
Hence $Z(t,s)$  is completely defined  on $\mathbb{R}^2$ 
by \eqref{Z_E} and \eqref{Z_inverse} and naturally
\begin{eqnarray}
\mbox{$z(t)=Z(t,\tau)z(\tau)$ for $t\ge \tau$ or 
$z(t)=[Z(t,\tau)]^{-1}z(\tau)$ for $t<\tau$}
\label{solution}
\end{eqnarray}
defines the solution of \eqref{depca_lineal} with initial condition 
$(\tau,z(\tau)).$

From \eqref{Z_E} and \eqref{Z_inverse} and for notational convenience,
we introduce the matrices $Z^{\pm}$ as follows
\begin{eqnarray}
\mbox{$Z^+(t,s)=Z(t,s)$ for $t\ge s$ and 
$Z^-(t,s)=[Z(s,t)]^{-1}$ for $t<s$,}
\label{eq:Z_mas_menos_notation}
\end{eqnarray}
where  $Z$ is evaluated by \eqref{Z_E}.

The results of this subsection are formalized below on Lemma~\ref{lema_cotas}
and Corollary~\ref{cor:depca_lineal}.

\subsection{Green matrices type}
$ $

In this subsection, we recall the concepts and notation of  Green
matrices $G_k$ and $G$.

\begin{defn}
\label{def:green_matrixGk}
(Green matrix $G_k(t,s)$ for $(t,s)\in [t_k,t_{k+1}]^2$)
Consider the notation defined on \eqref{eq:genstepf}.
For a given $k\in\Z$, the Green matrix type $G_k$ is defined from
 $[t_k,t_{k+1}]\times [t_k,t_{k+1}]$ to $\C^{p\times p}$ by the following relation
\begin{eqnarray}
G_{k}(t,s)&=&\begin{cases}
G_{k}^{+}(t,s), & \mbox{ for } (t,s)\in [t_k,t_{k+1}]\times [t_k,\gamma(s)],\\
G_{k}^{-}(t,s), & \mbox{ for } (t,s)\in [t_k,t_{k+1}]\times (\gamma(s),t_{k+1}], 
\label{green_matrix_type_local}
\end{cases}
\end{eqnarray}
where 
\begin{eqnarray*}
G_{k}^{+}(t,s)&=& Z^+(t,\tau)\Phi(\tau ,s),\;\;\mbox{ for } 
\tau\leq s\leq\gamma(s),\;\; t_{k}\leq \tau \leq t,\\
G_{k}^{-}(t,s)&=& Z^-(t,\tau)\Phi(\tau ,s),\;\;\mbox{ for } 
\gamma(s)< s \leq \tau,\;\; t \leq \tau \leq t_{k+1}.
\end{eqnarray*}
Here the notation $Z^{\pm}$ is defined on 
\eqref{eq:Z_mas_menos_notation}.
\end{defn}

In particular, 
the Green matrix \eqref{green_matrix_type_local},
for the advanced situation $t_k\leq s\leq \gamma(s)=t_{k+1}$
\begin{eqnarray*}
G_{k}^{+}(t,s)&=&\begin{cases}
Z^+(t,t_{k})\Phi(t_{k},s), & \mbox{ for }t_{k}\leq s\leq\gamma(t_k),\;\; s<t,\\
\Phi(t,s), & \mbox{ for }t\leq s\leq\gamma(t_k),
\end{cases}
\end{eqnarray*}
and for the delayed situations $t_k= \gamma(s)<s$
\begin{eqnarray*}
G_{k}^{-}(t,s)&=&\begin{cases}
Z^-(t,t_{k+1})\Phi(t_{k+1},s), & \mbox{ for }\gamma(s)<s\leq t_{k+1},\;\; t>s,\\
\Phi(t,s), & \mbox{ for }\gamma(s)<s\leq t<t_{k+1}.
\end{cases}
\end{eqnarray*}

\begin{defn}
\label{def:green_matrixG}
(Green matrix $G(t,s)$ for $(t,s)\in \R^2$)
Consider the notation  $i(t)$ given on (S1).
The Green matrix type $G:\R^2\to\C^{p\times p}$ 
for $s>t$ is defined as follows
\begin{eqnarray}
G(t,s)=
\begin{cases}
G_{i(t)}(t,s), & i(s)=i(t),
\\
G_{i(t)}(t_{i(t)+1},s)+G_{i(t)}(t,t_{i(t)+1}), & i(s)=i(t)+1,
\\
\displaystyle
 G_{i(t)}(t_{i(t)+1},t)
+\sum_{k=i(t)+1}^{i(s)-1}G_{k}(t_{k+1},t_k)
+G_{i(s)}(s,t_{i(s)-1}),
& i(s)>i(t)+1,
\end{cases}
\label{green_matrix_type_depcag}
\end{eqnarray}
and for $s<t$ by the following relation
\begin{eqnarray}
G(t,s)=
\begin{cases}
G_{i(s)}(t,s), & i(s)=i(t),
\\
G_{i(s)}(t_{i(s)+1},s)+G_{i(s)}(t,t_{i(s)+1}), & i(s)=i(t)+1,
\\
\displaystyle
 G_{i(s)}(t_{i(s)+1},s)
+\sum_{k=i(s)+1}^{i(t)-1}G_{k}(t_{k+1},t_k)
+G_{i(t)}(t,t_{i(t)-1}),
& i(t)>i(s)+1,
\end{cases}
\label{green_matrix_type_depcag:2}
\end{eqnarray}
where $G_k(\cdot,\cdot)$ is the matrix introduced 
on Definition~\ref{def:green_matrixGk}.
\end{defn}

\vspace{0.5cm}
We note that
\begin{eqnarray*}
G(t,s)&=&\begin{cases}
G^{+}(t,s), &\mbox{ for }s\leq\gamma(s),\\
G^{-}(t,s), &\mbox{ for }\gamma(s)<s,
\end{cases}
\end{eqnarray*}
where
$G^{\pm}(t,s)=\sum_{k=i(s)}^{i(t)}G_{k}^{\pm}(t,s)$.
This fact justifies the name of 'Green matrix type' for $G$.
Moreover, $G_{k}(t_{k+1},t_{k})=G_{k}^{+}(t_{k+1},t_{k})=Z(t_{k+1},t_k)$, 
which gives the recurrence relation:
\begin{eqnarray}
x(t_{i+1})=Z(t_{i+1},t_i)x(t_i), i\in \mathbb{Z}.\label{recurrence}
\end{eqnarray}
Note that from \eqref{recurrence}, we have
\begin{eqnarray}
X(t_{j+1})=\prod_{k=i}^{j}Z_{k}(t_{k+1},t_k)x(t_i),\ \ i\leq j,
\end{eqnarray}
which gives another way to obtain formula \eqref{Z_E}. 
It allows also to solve the linear non-homogeneous
DEPCAG \eqref{depca_lineal+g}.

Here we recall
an important result given in \cite{cauchy_and_green}.

\begin{lem}
\label{lema_cotas}
\cite{cauchy_and_green}
Assume that the condition (H2) is fulfilled,
then the condition (H4) holds and the matrices $Z(t,s)$ and 
$Z(t,s)^{-1}$ are well defined for any
$(t,s)\in\R^2$ with $t\ge s$. 
Moreover, there exists a positive
constant number $\alpha$ such that
\begin{eqnarray}
\vert \Phi(t,s) \vert\leq \rho(A), 
\quad
\vert Z(t,s)\vert\leq  \alpha
\quad\mbox{and}\quad
\vert  G_i(t,s)\vert\leq \alpha\rho(A) 
\mbox{ for }(t,s)\in [t_i,t_{i+1}],
\label{cotas_Z_Phi_G}
\end{eqnarray}
for each $i\in\mathbb{Z}$.
\end{lem}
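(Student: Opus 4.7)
The lemma bundles three statements: a uniform bound on the homogeneous propagator $\Phi$, the passage (H2)$\Rightarrow$(H4) together with a uniform bound on $J^{-1}$, and the resulting uniform bounds on $Z$ and on $G_i$ over each rectangle $[t_i,t_{i+1}]^2$. I would prove them in that order, since each step feeds into the next.

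\textbf{Step 1: bounding $\Phi$.} Since $\Phi(\cdot,s)$ solves $X'(t)=A(t)X(t)$ with $X(s)=I_p$, a direct Gronwall argument yields
\[
|\Phi(t,s)|\le \exp\!\Big(\Big|\int_s^t|A(u)|\,du\Big|\Big)
\]
for all $(t,s)\in\R^2$. For $(t,s)\in[t_i,t_{i+1}]^2$ the exponent is at most $\ln\rho_i^+(A)+\ln\rho_i^-(A)=\ln\rho_i(A)$, so $|\Phi(t,s)|\le\rho_i(A)\le\rho(A)$.

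\textbf{Step 2: invertibility of $J$ and a bound on $J^{-1}$.} The $J$-matrices that really enter \eqref{Z_E} all have $\zeta_i$ as one argument, so I would first treat $J(t,\zeta_i)$ for $t\in[t_i,t_{i+1}]$. If $t\ge\zeta_i$ then $[\zeta_i,t]\subset I_i^-$ and the $\Phi$-bound of Step 1 gives
\[
|J(t,\zeta_i)-I_p|\le \rho_i^-(A)\int_{I_i^-}|B(s)|\,ds=\nu_i^-\le\nu^-<1;
\]
the case $t\le\zeta_i$ is symmetric. A Neumann series then produces both non-singularity of $J(t,\zeta_i)$ and a uniform bound $|J(t,\zeta_i)^{-1}|\le(1-\max(\nu^+,\nu^-))^{-1}$. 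To obtain (H4) for arbitrary $(t,\tau)\in[t_i,t_{i+1}]^2$, I would express $J(t,\tau)$ in terms of $J(t,\zeta_i)$, $J(\tau,\zeta_i)$ and $\Phi(\tau,\zeta_i)$ via the cocycle identity \eqref{eq:propPhi}, inheriting invertibility from the base-point case.

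\textbf{Step 3: bounds on $Z$ and $G_i$.} On the rectangle $[t_i,t_{i+1}]^2$ the backward product in \eqref{Z_E} reduces to the single factor $Z^+(t,s)=E(t,\zeta_i)E(s,\zeta_i)^{-1}$, with $E$ given by \eqref{eq:def_of_E}, while $Z^-$ is determined by \eqref{Z_inverse}. The bounds on $\Phi$, $J$ and $J^{-1}$ combine to give $|Z(t,s)|\le\alpha$ with $\alpha$ depending only on $\rho(A),\nu^+,\nu^-$, and hence independent of $i$. Substituting this and Step 1 into $G_i^\pm(t,s)=Z^\pm(t,\tau)\Phi(\tau,s)$ yields $|G_i(t,s)|\le\alpha\rho(A)$, completing the proof.

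\textbf{Main obstacle.} The delicate point is the uniformity in $i$. For a fully general $J(t,\tau)$ the naive estimate $|J-I_p|\le\nu_i^++\rho_i^+(A)\nu_i^-$ is not obviously below $1$, so invertibility cannot come from a single Neumann series; it must be routed through the base point $\zeta_i$. Tracking the constants along the way so that the final $\alpha$ depends only on the suprema $\rho(A),\nu^+,\nu^-$, rather than on individual $\rho_i^\pm(A),\nu_i^\pm$, is the main accounting task.
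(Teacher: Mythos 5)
The paper itself gives no proof of this lemma; it is imported from \cite{cauchy_and_green}, so your argument can only be measured against the standard one. Your Step 1, the first half of Step 2, and Step 3 are correct and are exactly the intended route: the Gronwall bound $|\Phi(t,s)|\le\rho_i(A)\le\rho(A)$ on $[t_i,t_{i+1}]^2$; the Neumann-series inversion of $J(t,\zeta_i)$ from $|J(t,\zeta_i)-I_p|\le\nu_i^{\pm}\le\nu^{\pm}<1$, giving $|J(t,\zeta_i)^{-1}|\le(1-\max(\nu^+,\nu^-))^{-1}$; and the collapse of \eqref{Z_E} on a single interval to $Z(t,s)=E(t,\zeta_i)E(s,\zeta_i)^{-1}$, which together with \eqref{Z_inverse} yields $|Z(t,s)|\le\alpha$ with $\alpha$ depending only on $\rho(A)$ and $\nu^{\pm}$, hence $|G_i(t,s)|\le\alpha\rho(A)$.

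The gap is the second half of Step 2. The identity available to you is
\[
J(t,\tau)=I_p+\Phi(\tau,\zeta_i)\bigl[J(t,\zeta_i)-J(\tau,\zeta_i)\bigr],
\]
which is additive, not multiplicative: invertibility of $J(t,\zeta_i)$ and $J(\tau,\zeta_i)$ does not transfer to $J(t,\tau)$, so no ``routing through the base point'' can close this step. Worse, the implication you are trying to establish is false in that generality: take $p=1$, $A=0$ (so $\Phi\equiv 1$, $\rho_i^{\pm}(A)=1$) and $B\le 0$ with $\int_{I_i^{+}}|B|=\int_{I_i^{-}}|B|=9/10$; then (H2) holds with $\nu^{\pm}=9/10$, yet $J(t,t_i)=1-\int_{t_i}^{t}|B(s)|\,ds$ decreases continuously from $1$ to $-4/5$ as $t$ runs over $I_i$ and therefore vanishes at an interior point. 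Thus (H2) yields non-singularity of $J(t,\tau)$ only for the base point $\tau=\zeta_i=\gamma(t)$, not for all $t,\tau\in[t_i,t_{i+1})$. This does not endanger the conclusion \eqref{cotas_Z_Phi_G}, because, as the paper remarks after \eqref{Z_E}, only $J(\cdot,\zeta_i)$ enters the construction of $Z$ and $G_i$; but you should read (H4) in that weaker, base-point form (as the cited source does) rather than promise an argument for the literal (H4), which cannot exist.
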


\subsection{Variation of parameters formulas}
$ $

A variation of parameters formula to
\eqref{depca_lineal+g} can be deduced
by assuming that (H1) and (H4)  holds. 
Indeed, 
in \cite{cauchy_and_green} was proven that the solution of the equation
of \eqref{depca_lineal+g} is given by
\begin{eqnarray}
y(t)
&=&Z(t,\tau)y(\tau) +Z(t,\tau)
\int_{\tau}^{\zeta_i} \Phi(\tau,s)g(s)ds
+\sum_{k=i+1}^{j}Z(t,t_{k})
\int_{t_k}^{\zeta_k}\Phi(t_k,s)g(s)ds 
\nonumber\\
& &+\sum_{k=i}^{j-1}Z(t,t_{k+1})
\int_{\zeta_k}^{t_{k+1}}\Phi(t_{k+1},s)g(s)ds
+\int_{\zeta_j}^{t}\Phi(t,s)g(s)ds,
\quad
\mbox{for $(\tau,t)\in I_i\times I_j$.}
\qquad
\label{var_parametro_Z}
\end{eqnarray}
In particular, 
we have that 
\begin{eqnarray}
y(t)
&=&Z(t,\tau)y(\tau) +\int_{\tau}^{\zeta_i} G_i^{+}(t,s)g(s)ds
+\sum_{k=i+1}^{j}\int_{t_k}^{\zeta_k}G_{k}^{+}(t,s)g(s)ds
\nonumber\\
& &+\sum_{k=i}^{j-1}\int_{\zeta_k}^{t_{k+1}}G_{k}^{-}(t,s)g(s)ds
+\int_{\zeta_j}^{t}G_{j}^{-}(t,s)g(s)ds,
\quad
\mbox{for $(\tau,t)\in I_{i}^{+}\times I_j^{-}$,}
\label{var_parametro_G}
\end{eqnarray}
where the notation $I_{i}^{\pm}$ is defined in \eqref{eq:adv_del_int}.
Note that,
using Definition~\ref{def_solution}, a similar 
formula can be obtained for $(\tau,t)\in I_i^{-}\times I_{j}^{-} $ .
To be more precise, the following theorem  can be stated.

\begin{thm}
\label{theo_solution}
\cite{cauchy_and_green}
Assume that the hypotheses (H1) and (H4) (or (H1) and (H2)) are 
fulfilled. Then, for any $(\tau,\xi)\in\R\times\C^{p}$
the solution of \eqref{depca_lineal+g} such that $y(\tau)=\xi$
is defined on $\R$ and is given by 
\begin{eqnarray}
y(t)=Z(t,\tau)\xi +\int_{\tau}^{t}G(t,s)g(s)ds.  
\label{theo_var_parametro_depca}
\end{eqnarray}
In particular the formula \eqref{theo_var_parametro_depca} 
is reduced  to \eqref{var_parametro_Z} or 
\eqref{var_parametro_G} depending if $(\tau,t)\in I_i\times I_{j}$ 
or $(\tau,t)\in I_i^{+}\times I_{j}^{-}$, respectively.
\end{thm}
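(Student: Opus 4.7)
The strategy is to integrate \eqref{depca_lineal+g} interval by interval and then glue the local pieces together by continuity at the nodes $t_i$. The key observation is that on each $I_i=[t_i,t_{i+1})$ the argument $\gamma(t)=\zeta_i$ is constant, so \eqref{depca_lineal+g} reduces to the inhomogeneous linear ODE
\[
y'(t)=A(t)y(t)+B(t)y(\zeta_i)+g(t),
\]
in which $y(\zeta_i)$ is a (still unknown) constant vector. Classical variation of parameters with the fundamental matrix $\Phi$ of \eqref{A} therefore gives, for any $\tau,t\in I_i$,
\[
y(t)=\Phi(t,\tau)y(\tau)+\Phi(t,\tau)\!\left[\int_{\tau}^{t}\Phi(\tau,s)B(s)\,ds\right]\!y(\zeta_i)+\int_{\tau}^{t}\Phi(t,s)g(s)\,ds.
\]

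First, fixing $\tau\in I_i$ and $y(\tau)=\xi$, I would set $t=\zeta_i$ in the display above and solve algebraically for $y(\zeta_i)$. This step uses exactly hypothesis (H4): the matrix $J(\zeta_i,\tau)=I_p+\int_{\tau}^{\zeta_i}\Phi(\tau,s)B(s)\,ds$ is non-singular, so $y(\zeta_i)$ is uniquely determined by $\xi$ and the restriction of $g$ to the subinterval between $\tau$ and $\zeta_i$. Substituting back, one obtains $y$ on all of $I_i$; a direct rearrangement using the definition \eqref{eq:def_of_E} of $E$, the one-interval formula \eqref{Z_E} and Definition~\ref{def:green_matrixGk} identifies the resulting expression with
\[
y(t)=Z(t,\tau)\xi+\int_{\tau}^{t}G_i(t,s)g(s)\,ds,\qquad t\in I_i.
\]

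Next, I would extend the solution beyond $I_i$ by continuity. Evaluating the above at $t_{i+1}^-$ and setting $y(t_{i+1})$ equal to this limit, I would repeat the previous step on $I_{i+1}$ with this new initial datum, and iterate forward for $t>\tau$. The backward extension for $t<\tau$ is produced by the same procedure after invoking \eqref{Z_inverse}. The cocycle property \eqref{Z_multiplicative} shows that the accumulated homogeneous parts telescope to exactly $Z(t,\tau)\xi$, and the additive structure of Definition~\ref{def:green_matrixG} shows that the successive integrals $\int_{I_k}G_k(t,s)g(s)\,ds$ sum precisely to $\int_{\tau}^{t}G(t,s)g(s)\,ds$; the intermediate formulas \eqref{var_parametro_Z}--\eqref{var_parametro_G} are the concrete special cases one reads off when $(\tau,t)\in I_i\times I_j$ or $I_i^+\times I_j^-$. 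Finally, the resulting function satisfies conditions (a)--(c) of Definition~\ref{def_solution} by construction, and uniqueness follows because on each $I_i$ the ODE has a unique solution once $y(\zeta_i)$ is pinned down by (H4), while continuity at the nodes propagates this uniqueness across $\R$.

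The main obstacle is the combinatorial bookkeeping in the gluing step: one must simultaneously track the sign of $t-\tau$, the positions of $\tau$ and $t$ within their respective advanced/delayed subintervals $I_{i(\tau)}^{\pm}$ and $I_{i(t)}^{\pm}$, and the three cases of Definition~\ref{def:green_matrixG}. A clean way to organize this is to establish the one-interval base case first, then perform an induction on $|i(t)-i(\tau)|$ whose inductive step uses the factorization $Z(t,t_i)=Z(t,t_{j+1})Z(t_{j+1},t_i)$ together with continuity at $t_{j+1}$; the signed extension to $t<\tau$ is then a formal consequence of \eqref{Z_inverse}. The estimates of Lemma~\ref{lema_cotas} are not needed for the identity itself, only to guarantee that the formula is meaningful on all of $\R$.
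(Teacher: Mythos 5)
The paper states this theorem without proof, citing \cite{cauchy_and_green}, and your interval-by-interval argument --- solve the frozen-argument ODE on $I_i$ by variation of parameters, use (H4) to pin down $y(\zeta_i)$, then glue at the nodes and telescope via \eqref{Z_multiplicative} --- is exactly the argument of that reference, so the approach is the same and the proof is correct in outline. One small correction: when you set $t=\zeta_i$ the matrix you must invert is $J(\tau,\zeta_i)=I_p-\int_{\tau}^{\zeta_i}\Phi(\zeta_i,s)B(s)\,ds$ (consistent with the factor $E(\tau,\zeta_i)^{-1}=J(\tau,\zeta_i)^{-1}\Phi(\zeta_i,\tau)$ appearing in \eqref{Z_E}), not $J(\zeta_i,\tau)$; since (H4) asserts non-singularity of $J(t,\tau)$ for all pairs in the interval, nothing breaks.
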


By application of Theorem \ref{theo_solution} to the DEPCAGs
\eqref{depca_lineal},
\eqref{B}, and
\eqref{depca_lineal+f}
we can deduce some useful results which are formalized in the 
following three corollaries.

\begin{cor}
\label{cor:depca_lineal}
\cite{cauchy_and_green}
Assume that $A(t)$ and $B(t)$ satisfies the requirements of hypothesis (H1).
Moreover, assume that 
(H4) or (H2)  are fulfilled. 
Then, the following assertions are valid 
\begin{enumerate}[(i)]
 \item there exists $Z:\R^2\to\C^{p\times p}$ 
 the fundamental solution matrix of the linear system 
\eqref{depca_lineal} and is given by \eqref{Z_E} and \eqref{Z_inverse},
\item for every $(\tau,\xi)\in\R\times\C^{p}$, 
there exists on all of $\mathbb{R}$ a unique solution \eqref{depca_lineal}
such that $z(\tau)=\xi$. This solution is given by \eqref{solution}.
\end{enumerate}
Furthermore, conversely, the existence of 
a solution $z(t)=z(t,\tau,\xi)$ of \eqref{depca_lineal}
defined on all of $\mathbb{R}$
implies that the condition (H4) must be true.
\end{cor}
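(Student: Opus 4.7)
The plan is to work interval-by-interval, exploiting the key observation that on each $I_i=[t_i,t_{i+1})$ the argument $\gamma$ is frozen at $\zeta_i$, so \eqref{depca_lineal} reduces to an ordinary linear system with a ``constant-in-$\gamma$'' forcing term, and then to glue the pieces together using the continuity requirement in Definition~\ref{def_solution}. If only (H1) and (H2) are assumed, Lemma~\ref{lema_cotas} supplies (H4), so I can work throughout under (H4).

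On each $I_i$, classical variation of parameters with the fundamental matrix $\Phi$ of \eqref{A} yields, for any $\tau,t\in I_i$,
\[
z(t)=\Phi(t,\tau)z(\tau)+\Phi(t,\tau)\bigl(J(t,\tau)-I_{p}\bigr)z(\zeta_i).
\]
Choosing $\tau=\zeta_i$ collapses this to $z(t)=E(t,\zeta_i)\,z(\zeta_i)$ via \eqref{eq:def_of_E}; evaluating at $t=t_i$ then produces the algebraic constraint $z(t_i)=E(t_i,\zeta_i)\,z(\zeta_i)$. Hypothesis (H4) makes $J(t_i,\zeta_i)$, and hence $E(t_i,\zeta_i)=\Phi(t_i,\zeta_i)J(t_i,\zeta_i)$, invertible, so $z(\zeta_i)$ is uniquely recovered from $z(t_i)$, and therefore
\[
z(t)=E(t,\zeta_i)\,E(t_i,\zeta_i)^{-1}\,z(t_i),\qquad t\in I_i.
\]
Taking the left limit at $t_{i+1}$ and invoking the continuity condition of Definition~\ref{def_solution} produces the nodal recurrence $z(t_{i+1})=E(t_{i+1},\zeta_i)E(t_i,\zeta_i)^{-1}z(t_i)$. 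Iterating this recurrence from $t_i$ up to $t_j$ and substituting into the local formula above reproduces exactly \eqref{Z_E}, simultaneously delivering forward existence and uniqueness of the solution with $z(\tau)=\xi$, hence parts (i) and (ii) for $t\ge\tau$. For $t<\tau$, invertibility of every nodal factor $E(t_k,\zeta_{k-1})$ lets me define $Z(t,\tau)=Z(\tau,t)^{-1}$ via \eqref{Z_inverse}; formula \eqref{solution} is precisely what this construction outputs.

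For the converse, I would assume that for every $(\tau,\xi)\in\R\times\C^{p}$ a solution defined on all of $\R$ exists, and argue by contradiction. If $J(t^{\star},\tau^{\star})$ were singular for some $(t^{\star},\tau^{\star})\in[t_{i},t_{i+1})^{2}$, the local variation-of-parameters identity above shows that the affine map $z(\zeta_i)\mapsto z(t^{\star})-\Phi(t^{\star},\tau^{\star})z(\tau^{\star})=\Phi(t^{\star},\tau^{\star})(J(t^{\star},\tau^{\star})-I_p)z(\zeta_i)$ must be degenerate, so one can either prescribe an initial datum $\xi$ at $\tau^{\star}$ outside the image of the corresponding constraint (no admissible $z(\zeta_i)$, hence no solution) or find two distinct values of $z(\zeta_i)$ producing the same $(z(\tau^{\star}),z(t^{\star}))$ (two distinct solutions with the same initial datum); either possibility contradicts the hypothesis, forcing (H4).

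\textbf{Main obstacle.} The converse is the delicate part. The forward direction is largely bookkeeping once one recognizes the reduction on each $I_i$ to a linear ODE with a frozen delayed/advanced value, whereas the converse requires carefully manufacturing initial data that translate the algebraic failure of $J$ into a genuine obstruction to the existence or uniqueness of an Akhmet solution across the interval $I_i$.
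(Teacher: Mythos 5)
Your forward argument is essentially the paper's own route: the construction of $Z$ in subsection~2.3 proceeds by exactly the same interval-by-interval reduction (freeze $\gamma$ at $\zeta_i$, get $z(t)=E(t,\zeta_i)z(\zeta_i)$ on $I_i$, invert $E(\tau,\zeta_i)=\Phi(\tau,\zeta_i)J(\tau,\zeta_i)$ using (H4), glue with the continuity clause of Definition~\ref{def_solution} to obtain the nodal recurrence and hence \eqref{Z_E}, then extend to $t<\tau$ by \eqref{Z_inverse}), and the corollary is then read off from Theorem~\ref{theo_solution} with $g=0$. That part is fine and needs no further comment.

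The converse is where your argument has a genuine gap. First, the linear map you declare ``degenerate'' is $z(\zeta_i)\mapsto \Phi(t^{\star},\tau^{\star})\bigl(J(t^{\star},\tau^{\star})-I_p\bigr)z(\zeta_i)$, whose rank is governed by $J(t^{\star},\tau^{\star})-I_p$, not by $J(t^{\star},\tau^{\star})$; singularity of $J$ says nothing about $J-I_p$. Second, even if that map were degenerate, two preimages $z(\zeta_i)$ of the same value would not yield two solutions with the same initial datum, because $z(\tau^{\star})$ is not a free parameter --- it is itself determined by $z(\zeta_i)$. The genuine obstruction sits at $s=\zeta_i$: putting $t=\zeta_i$ in your variation-of-parameters identity with initial time $\tau^{\star}$ and datum $\xi$ gives the self-consistency equation
\[
J(\tau^{\star},\zeta_i)\,z(\zeta_i)=\Phi(\zeta_i,\tau^{\star})\,\xi ,
\]
and it is the unique solvability of this system for every $\xi$ --- i.e.\ invertibility of $J(\tau^{\star},\zeta_i)$, equivalently of $E(\tau^{\star},\zeta_i)$ --- that existence and uniqueness on $I_i$ actually force. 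So your contradiction must be run with the pair $(\tau^{\star},\zeta_i)$, not an arbitrary pair $(t^{\star},\tau^{\star})$. This matters because global solvability only delivers nonsingularity of $J(\cdot,\zeta_i)$ on $I_i$ (which the paper itself flags as ``the minimal requirement'' for the well definition of $Z$); it does not deliver invertibility of $J(t,\tau)$ for arbitrary $(t,\tau)\in I_i^2$ as (H4) literally demands --- indeed one checks $J(t,\tau)=I_p+\Phi(\tau,t)E(t,\zeta_i)-E(\tau,\zeta_i)$, which need not be invertible even when both $E(\cdot,\zeta_i)$ factors are. Restrict the claim accordingly and the converse goes through.
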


\begin{cor}
\label{3.3}
\cite{cauchy_and_green}
Assume that $B(t)$ satisfies the requirement of hypothesis (H1).
Then, for every $(\tau,\xi)\in\R\times\C^{p}$
the solution of DEPCAG \eqref{B} 
such that $u(\tau)=\xi$
is given by the formulae \eqref{var_parametro_Z} and 
\eqref{theo_var_parametro_depca} 
with $u(t)=z(t)$, $g=0$,
$\Phi(t,s)\equiv I_p$, 
or equivalently  $u(t)=z(t)$, $g=0$,
$E(t,s)=J(t,s)=I_p+\int_{s}^{t}B(r)dr$,
\begin{eqnarray*}
Z(t,\tau)
&=& J(t,\gamma(t_j))J(t_j,\gamma(t_j))^{-1}\\
& & \prod_{k=i+2}^{j}\Big(J(t_{k},\gamma(t_{k-1}))
J(t_{k-1},\gamma(t_{k-1}))^{-1}\Big)\quad 
J(t_{i+1},\gamma(\tau))J(\tau,\gamma(\tau))^{-1},
\end{eqnarray*}
for $(t,\tau)\in [t_j,t_{j+1})\times [t_i,t_{i+1})$ with $t\ge\tau$.
\end{cor}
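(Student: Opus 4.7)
The plan is to read \eqref{B} as the special case of \eqref{depca_lineal} in which $A\equiv 0$, and then to specialize the general statements of Corollary~\ref{cor:depca_lineal} and Theorem~\ref{theo_solution} to that situation.

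First I would observe that with $A\equiv 0$ the linear system \eqref{A} admits $X(t)\equiv I_p$ as a fundamental matrix, so the transition matrix collapses to $\Phi(t,s)=X(t)X^{-1}(s)=I_p$ for every $(t,s)\in\R^2$. Substituting into the definition of $J$ in (H4) yields
\begin{eqnarray*}
J(t,\tau)=I_p+\int_{\tau}^{t}\Phi(\tau,s)B(s)\,ds=I_p+\int_{\tau}^{t}B(s)\,ds,
\end{eqnarray*}
and hence, from \eqref{eq:def_of_E}, $E(t,s)=\Phi(t,s)J(t,s)=J(t,s)$. These two identifications are exactly the ``equivalent'' reductions announced in the corollary.

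Next I would substitute these simplifications into the product formula \eqref{Z_E}. Each factor $E(\cdot,\cdot)$ collapses to a factor $J(\cdot,\cdot)$, yielding precisely the explicit expression for $Z(t,\tau)$ stated in the corollary on $(t,\tau)\in[t_j,t_{j+1})\times[t_i,t_{i+1})$ with $t\ge\tau$. With $Z$ identified, I would invoke Theorem~\ref{theo_solution} with $g\equiv 0$ to obtain that the unique solution of \eqref{B} on $\R$ satisfying $u(\tau)=\xi$ is $u(t)=Z(t,\tau)\xi$, which is exactly the specialization of \eqref{var_parametro_Z} (and of \eqref{theo_var_parametro_depca}) asserted.

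The main obstacle is verifying that Corollary~\ref{cor:depca_lineal} and Theorem~\ref{theo_solution} actually apply, since the hypotheses of Corollary~\ref{3.3} mention only (H1) for $B$ while the underlying results require (H4). Invoking Lemma~\ref{lema_cotas}, I would observe that in the present setting condition (H2) degenerates: $\rho_i^{\pm}(A)=1$, so $\nu_i^{\pm}=\ln\rho_i^{\pm}(B)$ and the bound $\nu_i^{\pm}<1$ becomes the size condition $\int_{I_i^{\pm}}|B(s)|\,ds<1$, under which the non-singularity of each $J(t,\tau)$ for $t,\tau\in[t_i,t_{i+1})$ is automatic. Either this mild size condition is read as part of the implicit standing assumptions, or, alternatively, on a single interval $I_i$ where $J(t,\tau)$ differs from $I_p$ by a small integral, the required invertibility follows from a direct Neumann series argument without invoking (H2).
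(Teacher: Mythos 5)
Your proposal is correct and follows the same route the paper (implicitly) takes: Corollary~\ref{3.3} is presented as a direct specialization of Theorem~\ref{theo_solution} and the construction \eqref{Z_E}, obtained by setting $A\equiv 0$ so that $\Phi\equiv I_p$, $E=J=I_p+\int B$, and $g=0$. Your closing remark that (H1) alone does not guarantee the invertibility of $J$ required by (H4) -- so that a size condition such as $\int_{I_i^{\pm}}|B(s)|\,ds<1$ must be read as an implicit standing assumption -- is a legitimate point that the paper's statement glosses over, and your Neumann-series justification is the right way to close it.
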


\begin{cor}
\label{cor_integral_equation}
\cite{cauchy_and_green}
Assume that the hypotheses (H1) and (H4) (or (H1) and (H2)) are 
fulfilled, 
then for any $(\tau,\xi)\in\mathbb{R}\times\mathbb{C}^{p}$, 
every  $w(t)=w(t,\tau,\xi)$  
solution of the quasilinear DEPCAG \eqref{depca_lineal+f}
such that $w(\tau)=\xi$,
satisfies the integral equation 
\eqref{theo_var_parametro_depca} with 
$g(s)=f(s,z(s),z(\gamma(s)))$ or
\begin{eqnarray}
w(t)=Z(t,\tau)\xi +\int_{\tau}^{t}G(t,s)f(s,w(s),w(\gamma(s)))ds. 
\label{integral_equation}
\end{eqnarray}
Conversely, any solution of the integral equation
\eqref{integral_equation}, is a solution in the sense of 
Definition~\ref{def_solution}, of the 
quasilinear DEPCAG \eqref{depca_lineal+f}.
\end{cor}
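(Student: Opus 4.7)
The plan is to reduce Corollary~\ref{cor_integral_equation} to the already-established variation of parameters formula (Theorem~\ref{theo_solution}) by freezing the nonlinear term and treating it as a prescribed locally integrable forcing. Both implications follow without new analytic work, so the proof is essentially a bookkeeping argument.

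For the forward direction, suppose $w(t)=w(t,\tau,\xi)$ is a solution of \eqref{depca_lineal+f} in the sense of Definition~\ref{def_solution} with $w(\tau)=\xi$. I would set $g(t):=f(t,w(t),w(\gamma(t)))$ and first check that $g\in L^1_{\rm loc}(\R;\C^p)$. Using (H3)(ii)--(iii),
\begin{eqnarray*}
|g(t)|=|f(t,w(t),w(\gamma(t)))-f(t,0,0)|\le \eta(t)\Big(|w(t)|+|w(\gamma(t))|\Big),
\end{eqnarray*}
and since $w$ is continuous on $\R$ (hence locally bounded) and $\eta\in L^1_{\rm loc}$, while $w\circ\gamma$ is piecewise constant on each $I_i$, the product $\eta(t)(|w(t)|+|w(\gamma(t))|)$ is in $L^1_{\rm loc}$. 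Consequently $w$ solves the linear non-homogeneous DEPCAG \eqref{depca_lineal+g} with this particular forcing $g$ and initial data $w(\tau)=\xi$. Applying Theorem~\ref{theo_solution} then delivers \eqref{integral_equation} immediately.

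For the converse, suppose $w$ satisfies the integral equation \eqref{integral_equation}. First I would verify that $w$ is continuous on $\R$: the map $t\mapsto Z(t,\tau)\xi$ is continuous by construction of the fundamental solution (see Corollary~\ref{cor:depca_lineal}), and the integral $\int_\tau^t G(t,s)f(s,w(s),w(\gamma(s)))ds$ is continuous in $t$ because $G$ is bounded on compact sets by Lemma~\ref{lema_cotas} and the integrand is locally integrable (by the same estimate as above, now applied a priori to any continuous candidate $w$; one would argue that continuity of $w$ can be read off directly from the right-hand side of \eqref{integral_equation} by a standard continuity-of-parameter-integrals argument). Defining again $g(t):=f(t,w(t),w(\gamma(t)))\in L^1_{\rm loc}$, the identity \eqref{integral_equation} is precisely the variation-of-parameters representation \eqref{theo_var_parametro_depca} for \eqref{depca_lineal+g} with forcing $g$. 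By the uniqueness/characterization portion of Theorem~\ref{theo_solution}, $w$ must satisfy $w'(t)=A(t)w(t)+B(t)w(\gamma(t))+g(t)$ in the sense of Definition~\ref{def_solution}, and substituting $g(t)=f(t,w(t),w(\gamma(t)))$ recovers \eqref{depca_lineal+f}.

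The main obstacle, such as it is, is the minor regularity step in the converse direction: one must argue that $w$ defined by the integral equation is continuous (so that $g(\cdot)=f(\cdot,w(\cdot),w(\gamma(\cdot)))$ is well defined and locally integrable) before one can invoke Theorem~\ref{theo_solution}. This is routine once the uniform bounds on $Z$, $\Phi$, and $G_i$ from Lemma~\ref{lema_cotas} are in hand, together with (H3). Beyond this, no new estimates are required: the full strength of the corollary is simply that Theorem~\ref{theo_solution} characterizes solutions of the linear non-homogeneous DEPCAG, so applying it in both directions with $g$ chosen as the nonlinearity evaluated along $w$ gives the stated equivalence.
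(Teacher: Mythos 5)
Your argument is correct and is exactly the route the paper intends: Corollary~\ref{cor_integral_equation} is quoted from \cite{cauchy_and_green} without proof here, but its very statement (``satisfies the integral equation \eqref{theo_var_parametro_depca} with $g(s)=f(s,z(s),z(\gamma(s)))$'') signals the same reduction you carry out, namely freezing the nonlinearity as a locally integrable forcing and applying Theorem~\ref{theo_solution} in both directions. Your attention to the bootstrapping of continuity and local integrability of $t\mapsto f(t,w(t),w(\gamma(t)))$ in the converse direction is the only genuinely delicate point, and you handle it correctly via Lemma~\ref{lema_cotas} and (H3).
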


To close the subsection we remark that,
using the classical method of Wiener \cite{30}, (pp. 8,18,52,88), 
several authors
have obtained variation of parameters formula for the particular
cases of the steps functions $\gamma(t)$ given by $[t],[t+1/2],[t+1]$. 
They have obtained the compact formula:
\begin{eqnarray}
y(t)
&=&Z(t,\tau)y(\tau) +Z(t,\tau)\int_{\tau}^{\gamma(t_{i})} 
\Phi(\tau,s)g(s)ds \label{var_parametro_Z_}\\
& &+\sum_{k=i}^{j-1}Z(t,t_{k+1})
\int^{\gamma(t_{k+1})}_{\gamma(t_{k})}\Phi(t_{k+1},s)g(s)ds
+\int_{\gamma(t_{j})}^{t}\Phi(t,s)g(s)ds,\nonumber
\end{eqnarray}
for $(\tau,t)\in [t_i,t_{i+1})\times [t_j,t_{j+1})$, 
which follows from \eqref{theo_var_parametro_depca}, 
\eqref{green_matrix_type_depcag}-\eqref{green_matrix_type_depcag:2} and 
\eqref{green_matrix_type_local} 
since $G(t,s)=Z(t,t_{k+1})\Phi(t_{k+1},s)$ for
$s\in [\gamma(t_k),\gamma(t_{k+1})]=[\zeta_k,\zeta_{k+1}]$. 
Under other conditions, this formula was
extended to any DEPCAG by Akhmet \cite{4}.
This formula takes account of the intervals 
$[\zeta_k,\zeta_{k+1}]$ instead of the advanced intervals
$I_k^{+}$ and the delayed intervals 
$I_k^{-}$. This representation 
allows not to see the Green matrix type.

\subsection{Gronwall type inequalities for DEPCAGs}
$ $

The Gronwall type inequality for DEPCAG introduced and proved
in \cite{25} is formulated in 
the following lemma.

\begin{lem}
\label{lema_gronwal}
\cite{25}
Let $u,\eta:\mathbb{R}\rightarrow [0,\infty)$ be two functions
such that $u$ is continuous and $\eta$ is locally integrable satisfying
\begin{eqnarray}
\theta=\sup_{i\in\mathbb{Z}}\Big\{\;
\theta_i\;:\; \theta_i:=2\int_{I_i}\eta(s)ds
\,\Big\}<1.
\label{eq:defteta_gronwall}
\end{eqnarray}
Suppose that for $\tau\leq t$ or $t\leq \tau$, 
we have the inequality
\begin{eqnarray*}
u(t)\leq u(\tau)+\left\vert  
\int_{\tau}^{t} \eta(s)[u(s)+u(\gamma(s))]ds \right\vert.
\end{eqnarray*}
Then
\begin{eqnarray*}
u(t)&\leq & u(\tau) \exp \left\{ \tilde{\theta} 
\int_{\tau}^{t}\eta(s)ds \right\}
\quad
\mbox{with}
\quad
\tilde{\theta}
=\frac{2-\theta}{1-\theta}.
\end{eqnarray*}
\end{lem}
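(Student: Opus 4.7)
The plan is to exploit the fact that $\gamma$ is constant on each interval $I_k=[t_k,t_{k+1})$ in order to convert the nonlocal term $u(\gamma(s))$ into a pointwise multiple of $u(s)$, after which the classical scalar Gronwall inequality delivers the exponential bound.

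First I would derive a uniform estimate on each $I_k$. Fix $k\in\Z$ and pick any $\tau,s\in I_k$; for every $r$ strictly between $\tau$ and $s$ one has $\gamma(r)=\zeta_k\in I_k$, so setting $M_k:=\sup_{I_k}u$ (finite since $u$ is continuous on the bounded closure $[t_k,t_{k+1}]$) the hypothesis gives
\begin{equation*}
u(s)\le u(\tau)+\left|\int_\tau^s \eta(r)\bigl[u(r)+u(\zeta_k)\bigr]\,dr\right|\le u(\tau)+2M_k\int_{I_k}\eta(r)\,dr=u(\tau)+\theta_k M_k.
\end{equation*}
Taking the supremum over $s\in I_k$ and using $\theta_k\le\theta<1$ yields $M_k\le u(\tau)/(1-\theta_k)$ for every $\tau\in I_k$. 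Since $u(\zeta_k)\le M_k$ and $\zeta_k=\gamma(r)$ for every $r\in I_k$, this produces the crucial pointwise inequality
\begin{equation*}
u(\gamma(r))\le \frac{u(r)}{1-\theta_k}\le \frac{u(r)}{1-\theta},\qquad r\in I_k,\ k\in\Z.
\end{equation*}

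Plugging this bound back into the hypothesis,
\begin{equation*}
u(t)\le u(\tau)+\left|\int_\tau^t \eta(r)\Bigl[u(r)+\tfrac{1}{1-\theta}u(r)\Bigr]dr\right|=u(\tau)+\tilde\theta\left|\int_\tau^t \eta(r)u(r)\,dr\right|,
\end{equation*}
since $1+1/(1-\theta)=(2-\theta)/(1-\theta)=\tilde\theta$. The $u\circ\gamma$ term has disappeared, and the classical scalar Gronwall lemma (applied in either direction of integration) immediately yields $u(t)\le u(\tau)\exp\bigl(\tilde\theta\int_\tau^t\eta(s)\,ds\bigr)$, which is precisely the claim.

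The main obstacle is the local estimate in Step~1: the decisive observation is that $u(\zeta_k)$ is a constant as $r$ varies over $I_k$ and can therefore be absorbed into $M_k$ alongside $u(r)$. This is exactly why the hypothesis measures the size of $\eta$ on $I_k$ with the factor $2$ in $\theta_k=2\int_{I_k}\eta$ (accounting for the two copies of $M_k$) and why the threshold $\theta<1$ is what is needed to invert $1-\theta_k$ and close the estimate. After this local bound the remainder is routine.
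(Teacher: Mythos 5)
The paper does not reproduce a proof of this lemma (it is quoted from \cite{25}), so I am measuring your argument against the standard proof in the cited source. Your central idea --- absorb the nonlocal term $u(\gamma(s))$ by showing it is dominated by $(1-\theta)^{-1}$ times a local quantity, and then reduce to the classical scalar Gronwall inequality --- is exactly the right one, and the arithmetic $1+(1-\theta)^{-1}=\tilde\theta$ is the correct accounting. The gap is in Step 1: you obtain the pointwise bound $u(\gamma(r))\le u(r)/(1-\theta_k)$ by applying the integral hypothesis to \emph{arbitrary} pairs $\tau,s\in I_k$ \emph{in both orders}. But the lemma, as it is actually invoked in this paper (Theorems \ref{thm:Stabildepca_linel} and \ref{teo:dsolgendecag}) and as refined in the corollary that follows it, must be usable when the integral inequality is available only in the forward direction $t\ge\tau$. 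Under that one-sided hypothesis your derivation of $M_k\le u(\tau)/(1-\theta_k)$ for every $\tau\in I_k$, and hence of $u(\zeta_k)\le u(r)/(1-\theta)$ for $r$ in the delayed half $I_k^-=(\zeta_k,t_{k+1})$, fails: there you must bound $u$ at the \emph{earlier} point $\zeta_k$ by $u$ at the \emph{later} point $r$, and the forward inequality $u(t)\le u(\tau)+(\text{nonnegative})$ gives no such control --- it is satisfied, for example, by any nonincreasing $u$, which may drop from $u(\zeta_k)=1$ to $u(r)=\delta<1-\theta$ immediately after $\zeta_k$, violating your pointwise bound (while the conclusion of the lemma still holds for such a $u$). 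So your argument proves the lemma only under the strictly stronger hypothesis that the integral inequality holds for every ordered pair of points in both directions, which is not what the applications supply.

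The standard repair is to run your absorption argument on the nondecreasing majorant $v(t)=u(\tau)+\int_\tau^t\eta(s)\bigl[u(s)+u(\gamma(s))\bigr]ds$ rather than on $u$ itself, proving $u(\gamma(t))\le v(t)/(1-\theta)$ for $t\ge\tau$. When $\gamma(t)\le t$ this is immediate from $u\le v$ and the monotonicity of $v$ (no inversion of $1-\theta$ is needed at all); when $\gamma(t)=\zeta_{i(t)}>t$ your computation, applied to $v$ on $[t,\zeta_{i(t)}]\subset I_{i(t)}^{+}$, gives $v(\zeta_{i(t)})\le v(t)/(1-\theta)$ using only the forward inequality. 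Then $v'(t)=\eta(t)[u(t)+u(\gamma(t))]\le\tilde\theta\,\eta(t)v(t)$ almost everywhere, and the differential form of Gronwall yields $u(t)\le v(t)\le u(\tau)\exp\bigl(\tilde\theta\int_\tau^t\eta(s)ds\bigr)$. This variant also explains why the corollary after Lemma \ref{lema_gronwal} needs only $2\int_{I_i^{+}}\eta\,ds<1$ in the forward case: the inversion of $1-\theta$ is only ever performed on advanced subintervals. I recommend you restructure Step 1 around the majorant $v$; the rest of your argument then goes through unchanged.
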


If we consider the forward and backward 
situation in a separated way, then in \eqref{eq:defteta_gronwall}
instead of integration on the all $I_i$ to define $\theta_i$
we need only an integration on $I^+_i$ or $I^-_i$
respectively. More precisely, we have the following result.

\begin{cor}\cite{cauchy_and_green}
The results in Lemma \ref{lema_gronwal} are true
\begin{eqnarray*}
&&\mbox{for}\quad t\geq\tau,\quad\mbox{if}\quad
\theta=\sup_{i\in\mathbb{Z}}\Big\{\;
\theta^+_i\;:\; \theta^+_i:=2\int_{I^+_i}\eta(s)ds
\,\Big\}<1,\\
&&\mbox{for}\quad t\leq\tau,\quad\mbox{if}\quad
\theta=\sup_{i\in\mathbb{Z}}\Big\{\;
\theta^-_i\;:\; \theta^-_i:=2\int_{I^-_i}\eta(s)ds
\,\Big\}<1.
\end{eqnarray*}
\end{cor}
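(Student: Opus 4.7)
The plan is to adapt the proof of Lemma \ref{lema_gronwal} so as to exploit the orientation of integration. I would focus on the forward case $t \geq \tau$; the backward case follows by a symmetric argument interchanging the roles of $I_k^+$ and $I_k^-$. The pivotal observation is that for any $s \in I_k$ we have $\gamma(s) = \zeta_k \in I_k^+$. Consequently, to control the nonlocal term $u(\gamma(s)) = u(\zeta_k)$ appearing in the hypothesis, it suffices to understand $u$ on $I_k^+$ alone, and this is exactly what will let us weaken the original assumption $2\int_{I_k}\eta < 1$ to $2\int_{I_k^+}\eta < 1$ for forward integration.

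First, I would estimate $u(\zeta_k)$ in terms of $u(t_k)$. Applying the hypothesis on $[t_k, s]$ for $s \in I_k^+$ gives
\[
u(s) \leq u(t_k) + u(\zeta_k)\int_{t_k}^{s} \eta(r)\,dr + \int_{t_k}^{s} \eta(r) u(r)\,dr.
\]
Writing $M_k = \sup_{s \in I_k^+} u(s)$ and bounding both $u(r)$ and $u(\zeta_k)$ by $M_k$, one obtains $M_k \leq u(t_k) + \theta_k^+ M_k$, and since $\theta_k^+ \leq \theta < 1$ this yields $u(\zeta_k) \leq u(t_k)/(1-\theta_k^+)$. Second, I would propagate this estimate to $I_k^-$: for $s \in I_k^-$ the hypothesis becomes a classical (non-functional) Gronwall-type inequality in $u(s)$ on $[\zeta_k, s]$, since $u(\zeta_k)$ is already controlled, and the standard Gronwall lemma yields an exponential bound valid on all of $I_k$.

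Third, I would chain the per-interval bounds across all $I_k$ with $I_k \cap [\tau,t] \neq \emptyset$. The product of the per-interval multiplicative factors $(1-\theta_k^+)^{-1}$, combined with the exponentials coming from the standard Gronwall step, should assemble by the very same algebraic manipulation carried out in the proof of Lemma \ref{lema_gronwal} into the uniform exponential bound
\[
u(t) \leq u(\tau) \exp\!\left(\tilde\theta \int_\tau^t \eta(s)\,ds\right),\qquad \tilde\theta = \frac{2-\theta}{1-\theta}.
\]

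The main technical obstacle lies in this last step: one must verify that the constant $\tilde\theta = (2-\theta)/(1-\theta)$ is unaffected by replacing $\int_{I_k}\eta$ by $\int_{I_k^+}\eta$ in the bound on $u(\zeta_k)$, and that only the condition for its validity, namely $\theta < 1$, changes. This amounts to tracking that an estimate of the form $\ln(1/(1-x)) \leq x/(1-x)$, $x \in [0,1)$, is applied to $\theta_k^+$ rather than to $\theta_k^+ + \theta_k^-$, which leaves the final exponent of the form $\tilde\theta \int_\tau^t \eta$ intact. For the backward case $t \leq \tau$, one applies the hypothesis on $[\zeta_k, t_{k+1}]$ instead, obtaining $u(\zeta_k) \leq u(t_{k+1})/(1-\theta_k^-)$ and then propagating leftward through $I_k^+$; the same bookkeeping yields the conclusion under the condition on $\theta_k^-$.
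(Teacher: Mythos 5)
Your overall strategy is the right one, and it is the natural reason the corollary holds: in the forward direction the deviation $\gamma(s)=\zeta_k$ lies ahead of $s$ only for $s\in I_k^{+}$, so only there must the ``future'' value $u(\zeta_k)$ be pre-estimated through a smallness condition, while on $I_k^{-}$ the term $u(\gamma(s))$ is a past value and classical Gronwall applies with no condition at all; the backward case is the mirror image. (For the record, the paper gives no proof of this corollary --- it is imported from \cite{cauchy_and_green} --- so there is nothing internal to compare against; your argument is the expected one.) One point of rigor first: the hypothesis of Lemma~\ref{lema_gronwal} is anchored at the fixed $\tau$, so you cannot literally ``apply the hypothesis on $[t_k,s]$''. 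The standard repair is to introduce the nondecreasing majorant $v(t)=u(\tau)+\int_\tau^t\eta(r)[u(r)+u(\gamma(r))]\,dr$, which does satisfy the localized inequality $v(s)\le v(t_k)+\int_{t_k}^s\eta(r)[v(r)+v(\gamma(r))]\,dr$, and to prove the exponential bound for $v$.

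More substantively, your constant bookkeeping does not close as written. The per-interval factor $u(\zeta_k)\le u(t_k)/(1-\theta_k^{+})$, converted via the inequality you cite, $\ln\frac{1}{1-x}\le\frac{x}{1-x}$, gives $\exp\big(\frac{2}{1-\theta_k^{+}}\int_{I_k^{+}}\eta\big)$, and $\frac{2}{1-\theta}$ strictly exceeds $\tilde{\theta}=\frac{2-\theta}{1-\theta}$ for every $\theta\in(0,1)$; so that inequality is not sharp enough to recover the stated constant. You need either the sharper elementary bound $-\ln(1-x)\le\frac{x(2-x)}{2(1-x)}$ (true by comparing the power series $\sum_{n\ge1}x^n/n$ with $x+\sum_{n\ge2}x^n/2$), or, better, run the step as a differential inequality: for each $s\in I_k^{+}$ one has $v(\zeta_k)\le v(s)+2v(\zeta_k)\int_s^{\zeta_k}\eta$, hence $v(\zeta_k)\le v(s)/(1-\theta_k^{+})$ and $v'(s)\le\frac{2-\theta_k^{+}}{1-\theta_k^{+}}\,\eta(s)\,v(s)$, which integrates to the desired estimate with the correct $\tilde{\theta}$. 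This refinement also cures a second defect of the crude bound $M_k\le u(t_k)/(1-\theta_k^{+})$: that bound yields the conclusion only at the mesh points $\zeta_k$, $t_{k+1}$, whereas the corollary asserts $u(t)\le u(\tau)\exp\big(\tilde{\theta}\int_\tau^t\eta\big)$ at every $t$, including interior points of the advanced intervals where $\int_{t_k}^{t}\eta$ may be arbitrarily small. With these two repairs your plan, including the symmetric backward case, goes through.
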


\subsection{Existence and uniqueness of solution of the quasilinear system
\eqref{depca_lineal+f}} $ $

The existence, uniqueness, boundedness, 
stability and continuous dependences of
the solutions $w(t)=w(t,\tau,\xi)$ of 
DEPCAG \eqref{depca_lineal+f} are precisely  stated
as follows.

\begin{prop}
\label{prop_stability}
\cite{cauchy_and_green}
Assume that conditions (H1), (H3) and (H4) 
(or (H1)-(H3))are fulfilled. Moreover,
assume that the given function $A$ in (H1) and the existing function
$\eta$ in (H3) satisfies the estimate $\alpha\rho(A)\theta<1$
with $\rho(A),$ $\alpha$ and $\theta$   given on
\eqref{rho_A},
\eqref{cotas_Z_Phi_G} and \eqref{eq:defteta_gronwall}, respectively.
Then, for every $(\tau,\xi)\in\mathbb{R}\times\mathbb{C}^p$, there exists 
$w(t)=w(t,\tau,\xi)$ solution of \eqref{depca_lineal+f} in the sense
of Definition~\ref{def_solution} and
satisfying the following properties:
\begin{enumerate}[(i)]
\item  $w(\tau)=\xi$,
\item $w$ is defined on all of $\mathbb{R}$,
\item $w$ is solution of the integral equation \eqref{integral_equation},
\item $w$ is unique and depends continuously on $\tau$ and $\xi$.
\end{enumerate}
Moreover, if there exists a constant $c\geq 1$  such that
\begin{eqnarray}
\vert Z(t,s)\vert\leq c,\mbox{ for }t\geq s 
\label{1_condition_stability}
\end{eqnarray}
and if $\eta\in L^{1}(\mathbb{R})$, then $w$ is bounded 
and is stable, namely
\begin{eqnarray}
\exists c_1\in \R^+\quad :\quad 
\vert w(t,\tau,\xi_1)-w(t,\tau,\xi_2)\vert\leq c_1
\vert \xi_1 -\xi_2 \vert,\quad  
\forall t\geq \tau, \quad\forall\xi_1,\xi_2\in\C^p.
\label{sol_stable}
\end{eqnarray}
\end{prop}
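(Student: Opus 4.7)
The plan is to convert \eqref{depca_lineal+f} into its integral form \eqref{integral_equation} via Corollary~\ref{cor_integral_equation}, solve that integral equation locally on each interval $I_i$ by Banach's fixed point theorem, concatenate the local pieces by continuity at the nodes $t_i$ to cover all of $\R$, and finally obtain continuous dependence and stability from the Gronwall-type inequality of Lemma~\ref{lema_gronwal}.

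First I fix $(\tau,\xi)\in\R\times\C^p$, let $i_0$ be the unique index with $\tau\in I_{i_0}$, and on $C([t_{i_0},t_{i_0+1}],\C^p)$ endowed with the sup norm define
\[
(T\varphi)(t) := Z(t,\tau)\xi + \int_\tau^t G(t,s)\,f(s,\varphi(s),\varphi(\gamma(s)))\,ds.
\]
Since $\gamma(s)=\zeta_{i_0}\in I_{i_0}$ for every $s\in I_{i_0}$, this operator sends $C([t_{i_0},t_{i_0+1}],\C^p)$ into itself. Combining the bound $|G_{i_0}(t,s)|\le\alpha\rho(A)$ from Lemma~\ref{lema_cotas} with the Lipschitz hypothesis (H3)(iii) yields
\[
\|T\varphi_1-T\varphi_2\|_\infty \le \alpha\rho(A)\int_{I_{i_0}}\eta(s)\Big(|\varphi_1-\varphi_2|(s)+|\varphi_1-\varphi_2|(\gamma(s))\Big)\,ds \le \alpha\rho(A)\,\theta\,\|\varphi_1-\varphi_2\|_\infty,
\]
with $\theta$ as in \eqref{eq:defteta_gronwall}; the smallness assumption $\alpha\rho(A)\theta<1$ makes $T$ a strict contraction, whose unique fixed point $w_{i_0}$ satisfies $w_{i_0}(\tau)=\xi$ and, by Corollary~\ref{cor_integral_equation}, is the unique solution of \eqref{depca_lineal+f} on $I_{i_0}$ in the sense of Definition~\ref{def_solution}. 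I then extend to $\R$ inductively: taking $w_{i_0}(t_{i_0+1}^-)$ as the datum for an analogous contraction on $I_{i_0+1}$ produces, by continuity at $t_{i_0+1}$, a global solution on $[\tau,\infty)$, while the symmetric backward argument (relying on \eqref{Z_inverse}) on $I_{i_0-1},I_{i_0-2},\dots$ covers $(-\infty,\tau]$; since $t_i\to\pm\infty$ this gives the global solution claimed in (i)-(iii), whose uniqueness on $\R$ follows from the interval-wise uniqueness.

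For (iv) and the stability conclusion, applying the integral equation to two initial data gives
\[
|w(t,\tau,\xi_1)-w(t,\tau,\xi_2)| \le |Z(t,\tau)|\,|\xi_1-\xi_2| + \alpha\rho(A)\bigg|\int_\tau^t\eta(s)\Big(|w_1-w_2|(s)+|w_1-w_2|(\gamma(s))\Big)\,ds\bigg|,
\]
and since the effective Gronwall constant is $\alpha\rho(A)\theta<1$, Lemma~\ref{lema_gronwal} provides an exponential bound governed by $\int_\tau^t\eta$. Under the additional assumptions $|Z(t,s)|\le c$ for $t\ge s$ and $\eta\in L^1(\R)$, this bound becomes uniform in $t\ge\tau$ with $c_1 := c\exp\{\tilde\theta\,\|\eta\|_{L^1(\R)}\}$, yielding \eqref{sol_stable}; boundedness of $w(\cdot,\tau,\xi)$ itself then follows by comparison with the zero solution, which solves \eqref{depca_lineal+f} because $f(t,0,0)=0$ by (H3)(ii). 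The main technical obstacle I anticipate is the bookkeeping in the Gronwall step: one must verify that replacing $\eta$ by $\alpha\rho(A)\eta$ preserves the smallness hypothesis of Lemma~\ref{lema_gronwal}, and that the advanced/delayed mix hidden in $|w_1-w_2|(\gamma(s))$ is treated correctly for both $t\ge\tau$ and $t<\tau$; this is precisely the situation covered by the forward/backward corollary of Lemma~\ref{lema_gronwal}, which is therefore the key tool of the argument.
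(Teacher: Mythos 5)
Your argument---recasting \eqref{depca_lineal+f} as the integral equation \eqref{integral_equation}, running an interval-wise contraction under $\alpha\rho(A)\theta<1$, concatenating across the nodes $t_i$ (backward via \eqref{Z_inverse}/(H4)), and extracting continuous dependence and \eqref{sol_stable} from the DEPCAG Gronwall inequality of Lemma~\ref{lema_gronwal}---is correct and is essentially the method of \cite{cauchy_and_green}, to which this paper defers for the proof of the proposition. The only blemishes are cosmetic: your constant should carry the factor $\alpha\rho(A)$ inside the exponential, i.e. $c_1=c\exp\{\tilde{\theta}\,\alpha\rho(A)\Vert\eta\Vert_{L^{1}(\R)}\}$, and the claimed continuity in $\tau$ (not only in $\xi$) deserves the same one-line Gronwall comparison you give for the initial values.
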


\vspace{0.5cm}
The estimate $\alpha\rho(A)\theta<1$ required as one of the 
hypotheses of Proposition \ref{prop_stability} is more frequently
written in an explicit way as follows
\begin{eqnarray*}
\theta_i =2\alpha\rho(A)\int_{I_i}\eta(s)ds
\leq \theta=\sup_{i\in\mathbb{Z}}\theta_i <1.
\end{eqnarray*}
Moreover, we remark two facts. Firstly, 
if we are only interested in the forward (or backward) continuation, 
i.e. only for $t  \geq\tau$ (or $t\le \tau$), then instead of  the condition
$\alpha\rho(A)\theta<1$ we need only
an integration on $I^+_i$ (or $I^-_i$) i.e. the inequality
\begin{eqnarray*}
2\alpha \rho^{+}_{i}(A)\int_{I^+_i}\eta(s)ds\leq \theta^{+}<1
\qquad
\Big( \mbox{ or } 
2\alpha \rho^{-}_{i}(A)\int_{I^+_i}\eta(s)ds\leq \theta^{+}<1\Big),
\end{eqnarray*}
is required. Second, we remark that the
Proposition \ref{prop_stability} generalizes the 
corresponding results obtained by Akhmet \cite{3,4}.

\subsection{Exponential stability}
$ $

The definitions of Lyapunov stability of the solutions of DEPCAG can be given in
the same way as for ordinary differential equations. Let us formulate only one of them.

\begin{defn}
\label{def_exponential_stable}
The zero solution of DEPCAG \eqref{depca_lineal+f} is 
$\sigma$-exponentially stable if for an arbitrary positive 
$\epsilon$, there exists a positive number $\delta=\delta(\tau,\epsilon)$ 
such that $\vert \xi\vert\leq \delta$ implies that
$\vert x(t,\tau,\xi) \vert\leq c\vert \xi\vert e^{-\sigma(t-\tau)}$ 
for all $t\geq\tau\geq 0.$
\end{defn}


Let $Z(t,s)$ be the fundamental matrix  of the 
linear DEPCAG \eqref{depca_lineal} (see \eqref{Z_E}
and \eqref{eq:Z_mas_menos_notation}). By the representations
\eqref{solution} and \eqref{Z_E}, the stability of linear 
system \eqref{depca_lineal} can be analogously expressed 
as theorems for ordinary differential equations \cite{7,13,15,19}. 
An example of this fact is \eqref{1_condition_stability} and
the following theorem (see \cite{4}).

\begin{thm}
\label{thm:expstaboldepcalin}
The zero solution of the linear DEPCAG \eqref{depca_lineal} is 
$\sigma$-exponentially stable if and
only if there exist two positive numbers $c$ 
and $\sigma$ such that
\begin{eqnarray}
\vert Z(t,s)\vert\leq ce^{-\sigma(t-s)},\ \
\mbox{ for }t\geq s\geq 0. 
\label{exp_stable_sol_zero}
\end{eqnarray}
\end{thm}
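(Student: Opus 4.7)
The plan is to reduce both implications to the representation formula $z(t,\tau,\xi)=Z(t,\tau)\xi$ furnished by Corollary~\ref{cor:depca_lineal}(ii), together with the linearity of equation \eqref{depca_lineal} in the state variable. In this sense the argument is essentially a transcription of the classical ODE equivalence; no further DEPCAG-specific machinery (beyond the existence and multiplicative properties of $Z$ already established in Subsection~\ref{subsec:FSdepca_lineal}) is needed.

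For the sufficiency direction $(\Leftarrow)$, I would simply observe that if \eqref{exp_stable_sol_zero} holds, then for every $(\tau,\xi)\in[0,\infty)\times\mathbb{C}^p$ and every $t\geq\tau$,
\begin{eqnarray*}
|z(t,\tau,\xi)|\,=\,|Z(t,\tau)\xi|\,\leq\,|Z(t,\tau)|\,|\xi|\,\leq\,c\,|\xi|\,e^{-\sigma(t-\tau)}.
\end{eqnarray*}
This is precisely the bound required in Definition~\ref{def_exponential_stable}, so that any choice $\delta(\tau,\epsilon)>0$ works and the zero solution is $\sigma$-exponentially stable.

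For the necessity direction $(\Rightarrow)$, I would fix an arbitrary $\tau\geq 0$, pick some $\epsilon>0$, and let $\delta=\delta(\tau,\epsilon)>0$ be the number provided by Definition~\ref{def_exponential_stable}. For a nonzero $\xi\in\mathbb{C}^p$, rescale by setting $\xi^{\ast}=\delta\,\xi/|\xi|$, so that $|\xi^{\ast}|=\delta$; the stability estimate applied to $\xi^{\ast}$ together with linearity (giving $Z(t,\tau)\xi=(|\xi|/\delta)\,Z(t,\tau)\xi^{\ast}$) then yields
\begin{eqnarray*}
|Z(t,\tau)\xi|\,=\,\frac{|\xi|}{\delta}\,|z(t,\tau,\xi^{\ast})|\,\leq\,\frac{|\xi|}{\delta}\,c\,|\xi^{\ast}|\,e^{-\sigma(t-\tau)}\,=\,c\,|\xi|\,e^{-\sigma(t-\tau)}.
\end{eqnarray*}
Taking the supremum over unit vectors $\xi\in\mathbb{C}^p$ produces $|Z(t,\tau)|\leq c\,e^{-\sigma(t-\tau)}$; renaming $\tau$ as $s$ gives exactly \eqref{exp_stable_sol_zero}.

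The only subtle point worth checking is that the two constants $(c,\sigma)$ in Definition~\ref{def_exponential_stable} must be genuinely uniform in $\tau\geq 0$, so that they can be transferred to a uniform bound on $|Z(t,s)|$; this is in fact built into the statement of the definition, since $c$ and $\sigma$ are introduced without any parametric dependence on $\tau$ or $\xi$. Hence the main obstacle (were there one) would only be notational; the theorem really follows from the representation $z=Z\xi$ plus scaling, with no nontrivial analytic input beyond what Corollary~\ref{cor:depca_lineal} already provides.
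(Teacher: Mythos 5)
Your proof is correct. The paper does not actually supply a proof of Theorem~\ref{thm:expstaboldepcalin} (it is quoted from Akhmet \cite{4}), but your argument via the representation $z(t,\tau,\xi)=Z(t,\tau)\xi$ from Corollary~\ref{cor:depca_lineal} for sufficiency and the rescaling $\xi^{\ast}=\delta\,\xi/|\xi|$ for necessity is exactly the standard one, and it correctly flags the only delicate point, namely that $c$ and $\sigma$ in Definition~\ref{def_exponential_stable} carry no dependence on $\tau$ or $\xi$.
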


\vspace{0.5cm}
By Lemma \ref{lema_cotas} 
\begin{eqnarray}
\vert G(t,s)\vert\leq c \rho(A)e^{\sigma\overline{t}}
e^{-\sigma(t-s)}, \mbox{ for } t\geq s\geq 0,
\end{eqnarray}
where $\overline{t}$ is the notation introduced on (S3) (see~\eqref{eq:deftline}).

From the respective stability of the difference system \eqref{recurrence}:
\begin{eqnarray}
z(t_{i+1})=Z(t_{i+1},t_i)z(t_i),\ i\in\mathbb{Z},\label{discrete_equation}
\end{eqnarray}
 whose solutions are given by
\begin{eqnarray}
 z(t_{j+1})=\prod_{k=i}^{j}Z(t_{k+1},t_k)z(t_i)=Z(t_{j+1},t_i)z(t_i),\ \ i< j,
\end{eqnarray}
we can formulate several theorems which provide 
sufficient conditions for the stability of
linear systems of DEPCAG \eqref{depca_lineal}. 
The stability of the solution $z=0$ of the difference
system \eqref{discrete_equation} is deduced from 
boundedness or convergence of $Z(t_{j+1},t_i)$ as $j\rightarrow\infty$. 
Taking on account of formula \eqref{Z_E}, from the hypotheses 
(S1) and (S2), we have, e.g. the following theorem.

\begin{thm}
\label{theo_exp_asym_stable}
Assume that conditions (H1), (H4), (S1) and (S3) 
are fulfilled and the zero
solution of DEPCAG \eqref{depca_lineal} is exponentially 
asymptotically stable if $0< \rho <1$, we have
\begin{eqnarray}
\vert E(t_{k+1},\zeta_k)E(t_{k},\zeta_k)^{-1} \vert
=\vert Z(t_{k+1},t_k)\vert\leq \rho,\ \ k\in \mathbb{N}.
\label{condition_exp_asy_stable}
\end{eqnarray}
\end{thm}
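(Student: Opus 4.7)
The plan is to reduce the theorem to Theorem~\ref{thm:expstaboldepcalin}, which characterizes $\sigma$-exponential stability of the zero solution by the existence of constants $c,\sigma>0$ with $|Z(t,s)|\leq ce^{-\sigma(t-s)}$ for $t\geq s\geq 0$. The strategy is threefold: first, iterate the discrete hypothesis \eqref{condition_exp_asy_stable} via the multiplicative property to obtain geometric decay of $Z$ on the mesh $\{t_i\}$; second, extend this decay to arbitrary $(t,s)\in\R^2$ by chaining with the in-interval transition matrices, which are uniformly bounded by (S1); third, convert the geometric-in-index decay into exponential-in-$(t-s)$ decay using the uniform interval-length bound (S3).

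For the first step, the multiplicative property \eqref{Z_multiplicative} gives $Z(t_{j+1},t_i)=\prod_{k=i}^{j}Z(t_{k+1},t_k)$ as a backward product, so the hypothesis $|Z(t_{k+1},t_k)|\leq\rho$ immediately yields $|Z(t_{j+1},t_i)|\leq\rho^{j-i+1}$ for every $j\geq i\geq 0$. For the second step, given $s\in I_i$ and $t\in I_j$ with $j\geq i+1$, I would use \eqref{Z_multiplicative} to factor
$$Z(t,s)=Z(t,t_j)\cdot Z(t_j,t_{i+1})\cdot Z(t_{i+1},s),$$
(and treat the case $j=i$ separately as an in-interval estimate). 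The first factor is bounded by a constant $M$ coming from (S1); the middle factor is bounded by $\rho^{j-i-1}$ by the previous display; and the tail factor $|Z(t_{i+1},s)|$ I would bound uniformly in $(i,s)$ by using the explicit representation \eqref{Z_E}, which in this range reduces to $Z(t_{i+1},s)=E(t_{i+1},\zeta_i)E(s,\zeta_i)^{-1}$, together with the non-singularity of $J$ from (H4) and the local integrability of $A,B$ from (H1).

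For the third step, (S3) implies $t_{k+1}-t_k\leq 2\bar t$ uniformly in $k$, hence
$$t-s\leq\sum_{k=i}^{j}(t_{k+1}-t_k)\leq 2\bar t\,(j-i+1),$$
so that $j-i-1\geq (t-s)/(2\bar t)-2$ and, with $\sigma:=-\ln\rho/(2\bar t)>0$,
$$\rho^{\,j-i-1}\leq\rho^{-2}\exp\!\left(\frac{(t-s)\ln\rho}{2\bar t}\right)=\rho^{-2}e^{-\sigma(t-s)}.$$
The case $j=i$ is absorbed by the same constant since $t-s\leq 2\bar t$ there. Assembling the three bounds yields $|Z(t,s)|\leq ce^{-\sigma(t-s)}$ on $\{t\geq s\geq 0\}$, and the conclusion follows from Theorem~\ref{thm:expstaboldepcalin}.

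The main obstacle will be the uniform control of the tail factor $|Z(t_{i+1},s)|$ for $s\in I_i$ and $i\in\Z$: hypothesis (S1) directly bounds only $|Z(t,t_i)|$ with $t\in I_i$, whereas the required estimate is a forward propagation from an \emph{interior} point $s\in I_i$ to $t_{i+1}$. Rewriting $Z(t_{i+1},s)=Z(t_{i+1},t_i)\,[Z(s,t_i)]^{-1}$ shifts the problem to a uniform upper bound on $|[Z(s,t_i)]^{-1}|$, whose existence is guaranteed by the non-singularity condition (H4) but whose uniformity in $(i,s)$ is exactly where the interplay between (H1), (H4), (S1) and (S3) is essential; everything else in the argument is essentially an algebraic rearrangement of the multiplicative property together with the explicit formula \eqref{Z_E}.
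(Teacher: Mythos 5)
Your proposal follows exactly the route the paper sketches in the paragraph preceding the theorem: geometric decay of the discrete solution $Z(t_{j+1},t_i)=\prod_{k=i}^{j}Z(t_{k+1},t_k)$ from \eqref{condition_exp_asy_stable} and \eqref{Z_multiplicative}, uniform in-interval control from (S1), and conversion of index decay to exponential decay in $t-s$ via the mesh bound (S3), concluding by Theorem~\ref{thm:expstaboldepcalin}; the paper gives no more detail than this, so your write-up is if anything more complete. The one step you flag but do not close --- a bound on the tail factor $Z(t_{i+1},s)=E(t_{i+1},\zeta_i)E(s,\zeta_i)^{-1}$ uniform in $i$ and $s\in I_i$ --- is a real issue with the hypotheses as printed: (S1) controls only forward propagation $Z(t,t_{i(t)})$ from the left endpoint, and (H1)+(H4) give existence but not uniformity of $E(s,\zeta_i)^{-1}$. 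The intended resolution is Lemma~\ref{lema_cotas}: under (H2) (which the paper notes implies both (H4) and (S1)) one has $\vert Z(t,s)\vert\le\alpha$ uniformly on each $[t_i,t_{i+1}]^2$, including the backward/interior case, which supplies exactly the missing bound; so your argument is correct once (H4)+(S1) is read as (H2), and otherwise the gap lies in the theorem's hypothesis list rather than in your decomposition.
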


\vspace{0.5cm}
We observe that the
condition \eqref{condition_exp_asy_stable} is the natural ones.
Moreover, we note  that there exist other conditions which 
permits to get similar results. For instance,
under other several additional assumptions, Akhmet \cite{4} consider 
$\vert E(t_{k+1},\zeta_{k+1})E(t_{k+1},\zeta_k)^{-1} \vert\leq \rho$ 
instead of \eqref{condition_exp_asy_stable}. This condition
is equivalent to \eqref{condition_exp_asy_stable} if $A$
and $B$ are constants and scalars and (S3) holds. 
At present, the DEPCAG more studied
satisfy (S3) , see \cite{1,3,6,11,12,30} and also our 
example given below on subsection~\ref{sub:example}.

On the other hand, we note that
some interesting results similar 
to theorems~\ref{thm:expstaboldepcalin}
and \ref{theo_exp_asym_stable}
can be found in \cite{4,11,29,30}.


\subsection{An example for \eqref{depca_lineal} with $A$ and $B$ constants} $ $
\label{sub:example}
$ $

Study the dichotomic character in the following linear DEPCAG:
\begin{eqnarray}
x'(t)=Ax(t)+Bx(\gamma(t))\label{depca_example_2}
\end{eqnarray}
where $A$ and $B$ are fixed real constant matrices such 
that $A^{-1}$ exists and the function $\gamma(t)$ is
defined by sequences $t_i$ and $\zeta_i$ satisfying:
$$\zeta_i-t_i=\nu^{+},\ t_{i+1}-\zeta_i=\nu^{-},\ \ \ i\in\mathbb{Z},$$
where $\nu^{+},\nu^{-}>0$ are fixed numbers. 
Calling,
\begin{eqnarray*}
\Lambda(s)=e^{sA}+A^{-1}(e^{sA}-I_{p})B=e^{sA}[I_{p}+A^{-1}(I_p-e^{-sA})B],
\end{eqnarray*}
we obtain $E(t,\tau)=\Lambda(t-\tau)$. 

Then, to apply Theorem \ref{theo_exp_asym_stable} we must study:
\begin{eqnarray*}
Z(t_{i+1},t_i)=E(t_{i+1},\zeta_i)E(t_i,\zeta_i)^{-1}
=\Lambda(\nu^{-})\Lambda(-\nu^{+})^{-1}=\Lambda_1.
\end{eqnarray*}
For $A=0$, we have $\Lambda(s)=I_p+sB$  and
\begin{eqnarray*}
\Lambda_1 =(I_p+\nu^{-}B)(I_p-\nu^{+}B)^{-1}.
\end{eqnarray*}
For $\gamma(t)=c[t+d/c], c>0,\ c>d$, 
we have $\zeta_i=c,\ t_i=ci-d, \nu^{+}=d,\ \nu^{-}=c-d,
\Lambda_1=\Lambda(c-d)\Lambda(-d)^{-1} $.
In particular, for the famous case of Cooke and 
Wiener \cite{11}, $\gamma(t)=2[t+1/2]$, 
we have $\Lambda_1=\Lambda(1)\Lambda(-1)^{-1}$.
The zero solution is stable if $\rho(\Lambda_1)\leq 1$ 
and exponentially stable if $\rho(\Lambda_1)<1$.
In these conditions,  there exist  $ c \geq 1$ 
and $\kappa$ constants such that:
\begin{eqnarray}
\vert Z(t,\tau)\vert \leq ce^{\kappa\ln(t-\tau)},\ \ t\geq \tau.
\end{eqnarray}
The situation for $t<\tau$ can be treated similarly. 

In the case $A=a\neq 0$ and $B=b$ be scalars, i.e. the equation
\begin{eqnarray}
x'(t)=ax(t)+bx(\gamma(t)),\label{depca_example_2_const}
\end{eqnarray}
we can find explicit conditions on the
coefficients and the sequences for providing exponential 
stability for of zero solution of
\eqref{depca_example_2_const}, see \cite{5,11,29,30}. On the
basis of the previous analysis, we must have
$\rho=\vert \Lambda_1 \vert
=\vert \Lambda(\nu^{-})\Lambda(-\nu^{+})^{-1} \vert<1$, 
either of the inequalities
\begin{eqnarray}
&& -b>a>0,\ \ 
[e^{a\nu^{-}}+e^{-a\nu^{+}}]\left[1+\frac{b}{a}
\right]>2\frac{b}{a}
\label{a}
\\
&&-b>a,\ a<0,\ \ 
[e^{a\nu^{-}}+e^{-a\nu^{+}}]\left[1+\frac{b}{a}
\right]>2\frac{b}{a}
\label{b}
\end{eqnarray}
is sufficient for the zero solution to be exponentially 
stable. For the completely
delayed case $\nu^{+}=0$ and, hence, $t_{i+1}-t_i=\nu^{-}$, 
the conditions \eqref{a} and \eqref{b} are,
respectively, transformed to
\begin{eqnarray*}
-b>a>0,\;\, e^{a\nu^{-}}<\frac{b-a}{b+a}\;\,\mbox{ and }\;\,
-b>a,\;\,\ a<0,\;\, \ \ e^{a\nu^{-}}<\frac{b-a}{b+a}\cdot
\end{eqnarray*}
The stability for the case $a=0$ 
can be also studied.

\section{Stability of solutions for \eqref{depca_lineal+f}}
\label{sec:dos}

\begin{thm}
\label{thm:Stabildepca_linel}
Assume that the hypotheses (H1)-(H3), (S1) and (S3) are 
fulfilled and  the
zero solution of linear DEPCAG \eqref{depca_lineal}
is $\sigma$-exponentially stable.
Moreover, assume the function
$\eta$ on (H3), $\rho(A)$ defined in \eqref{rho_A}
and $\sigma$
satisfy the following requirements
\begin{eqnarray}
\theta:=\sup_{i\in\mathbb{Z}}\Big\{
2ce^{\sigma \overline{t}}\rho(A)
\int_{t_i}^{\zeta_i}\eta(s)ds\Big\}<1,
\qquad
\beta:=\limsup_{t\rightarrow\infty}\frac{\int_{\tau}^{t}\eta(s)ds}{t}<\infty.
\label{eq:hypotesis_c2}
\end{eqnarray}
Then, there exists $\sigma_0=\sigma-\beta\mu c\rho(A)e^{2\sigma\overline{t}}$ 
with $\mu=(2-\theta)(1-\theta)^{-1}$ such that
the zero solution of
\eqref{depca_lineal+f} is $\sigma_0$-exponentially stable.
In particular, if $\eta\in L^{1}([0,\infty))$, then the
$\sigma$-exponential
stability follows and the stability of the zero solution 
of \eqref{depca_lineal} implies the stability of the
solution zero of~\eqref{depca_lineal+f}.
\end{thm}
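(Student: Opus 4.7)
The plan is to convert \eqref{depca_lineal+f} into its integral representation, combine the $\sigma$-exponential decay of $Z$ with the Lipschitz bound on $f$, and then extract the final decay rate via the DEPCAG Gronwall lemma together with the growth control provided by $\beta$. Concretely, by Corollary~\ref{cor_integral_equation} the solution satisfies
\[
w(t)=Z(t,\tau)\xi+\int_\tau^t G(t,s)f(s,w(s),w(\gamma(s)))\,ds,\qquad t\ge\tau.
\]
Inserting the $\sigma$-exponential bound $|Z(t,s)|\le ce^{-\sigma(t-s)}$ from Theorem~\ref{thm:expstaboldepcalin}, the induced estimate $|G(t,s)|\le c\rho(A)e^{\sigma\overline{t}}e^{-\sigma(t-s)}$, and the Lipschitz bound $|f(s,x,y)|\le\eta(s)(|x|+|y|)$ (which follows from (H3), since $f(s,0,0)=0$) produces a pointwise inequality for $|w(t)|$ with exponentially decaying kernel.

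Next, I would multiply through by $e^{\sigma t}$ and set $v(t):=e^{\sigma t}|w(t)|$. Using (S3) to control $e^{\sigma s}|w(\gamma(s))|\le e^{\sigma\overline{t}}v(\gamma(s))$ (since $|s-\gamma(s)|\le\overline{t}$) leads to
\[
v(t)\le c|\xi|e^{\sigma\tau}+\kappa\int_\tau^t\eta(s)\bigl(v(s)+v(\gamma(s))\bigr)\,ds,\qquad \kappa:=c\rho(A)e^{2\sigma\overline{t}}.
\]
The hypothesis $\theta<1$ is exactly what is required to activate the forward form of Lemma~\ref{lema_gronwal} (via its Corollary), yielding
\[
v(t)\le c|\xi|e^{\sigma\tau}\exp\!\Bigl(\mu\kappa\int_\tau^t\eta(s)\,ds\Bigr),\qquad \mu=\frac{2-\theta}{1-\theta},
\]
which, after translating back, gives $|w(t)|\le c|\xi|\exp\!\bigl(-\sigma(t-\tau)+\mu\kappa\int_\tau^t\eta(s)\,ds\bigr)$.

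Finally, I would exploit the growth hypothesis on $\eta$. Since $\limsup_{t\to\infty}t^{-1}\int_\tau^t\eta(s)\,ds=\beta<\infty$, for each $\varepsilon>0$ there exists $T_\varepsilon$ such that $\int_\tau^t\eta(s)\,ds\le(\beta+\varepsilon)t$ for $t\ge T_\varepsilon$; inserting this bound and letting $\varepsilon\to 0$ produces the decay exponent $\sigma_0=\sigma-\beta\mu\kappa=\sigma-\beta\mu c\rho(A)e^{2\sigma\overline{t}}$, as required. When $\eta\in L^{1}([0,\infty))$ one has $\beta=0$, so the Gronwall factor is globally bounded and the full $\sigma$-exponential stability of \eqref{depca_lineal} is inherited by \eqref{depca_lineal+f}.

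The main technical obstacle is reconciling the precise form of the hypothesis $\theta<1$, which integrates $\eta$ only over the advanced intervals $I_i^{+}=[t_i,\zeta_i]$ and uses the factor $ce^{\sigma\overline{t}}\rho(A)$, with the condition literally required by Lemma~\ref{lema_gronwal}, which nominally wants an integral of the effective coefficient $\kappa\eta$ over the full interval $I_i$. The bookkeeping must split the factor $e^{\sigma(s-\gamma(s))}$ arising in Step~2 into its advanced regime (where it is $\le 1$) and its delayed regime (where it is $\le e^{\sigma\overline{t}}$), and then invoke the one-sided forward version of Lemma~\ref{lema_gronwal} so that only $I_i^{+}$-integrations enter the smallness condition, while the constant $\kappa$ absorbing the delayed contribution still carries $e^{2\sigma\overline{t}}$.
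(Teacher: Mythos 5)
Your proposal follows essentially the same route as the paper's proof: the integral (variation of constants) representation from Corollary~\ref{cor_integral_equation}, the bounds $|Z(t,s)|\le ce^{-\sigma(t-s)}$ and $|G(t,s)|\le c\rho(A)e^{\sigma\overline{t}}e^{-\sigma(t-s)}$ together with (H3), the substitution $u(t)=e^{\sigma t}|w(t)|$ controlled via (S3), the DEPCAG Gronwall inequality of Lemma~\ref{lema_gronwal} with $\mu=(2-\theta)/(1-\theta)$, and finally the $\beta$-hypothesis to extract $\sigma_0$. The reconciliation you flag between the $I_i^{+}$-form of the smallness condition $\theta<1$ and the kernel $c\rho(A)e^{2\sigma\overline{t}}\eta$ appearing in the Gronwall step is a real bookkeeping point that the paper passes over silently, and your resolution via the one-sided forward version of the lemma is consistent with how the paper intends it.
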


\begin{proof}
If we consider that $w(t)=w(t,\tau,\xi)$ is a solution of \eqref{depca_lineal+f} 
such that $w(\tau)=\xi$
and, without loss of generality, we assume that 
$t_i\leq \tau<\zeta_i\leq t_{i+1}<\cdots<t_j\leq\zeta_j<t$. 
Then by Corollary~\ref{cor_integral_equation} and
Theorem~\ref{theo_solution}
(see \eqref{integral_equation} and \eqref{var_parametro_Z}), 
we have that
\begin{eqnarray*}
w(t)&=&Z(t,\tau)w(\tau)+\int_{\tau}^{t}G(t,s)f(s,w(s),w(\gamma(s))ds\\
&=&Z(t,\tau)\xi
+Z(t,\tau)\int_{\tau}^{\zeta_{i(\tau)}}\Phi(\tau,s)f(s,w(s),w(\gamma(s))ds
\\
&&
+\sum_{k=i(\tau)+1}^{j(t)}Z(t,t_{k})
\int_{t_{k}}^{\zeta_{k}}\Phi(t_{k},s)f(s,w(s),w(\gamma(s))ds\\
&&+\sum_{k=i(\tau)}^{j(t)-1}Z(t,t_{k+1})
\int_{\zeta_{k}}^{t_{k+1}}\Phi(t_{k+1},s)f(s,w(s),w(\gamma(s))ds
\\
&&
+\int_{\zeta_{j(t)}}^{t}\Phi(t,s)f(s,w(s),w(\gamma(s))ds.
\end{eqnarray*}
By the hypothesis of the $\sigma$-exponential stability
of the solution for \eqref{depca_lineal}, the assumption
(H3), the application of Theorem~\ref{thm:expstaboldepcalin}
and Lemma~\ref{lema_cotas}, 
and by (S3), we deduce the following bound
for $w$
\begin{eqnarray*}
\left|w(t)\right|&\leq&
ce^{-\sigma(t-\tau)}\left|\xi\right|
+c\rho(A)e^{-\sigma(t-\tau)}
\int_{\tau}^{\zeta_{i(\tau)}}
	\eta(s)\Big(|w(s)|+|w(\gamma(s))|\Big)ds\\
&&+\sum_{k=i(\tau)+1}^{j(t)}c\rho(A)e^{-\sigma(t-t_{k})}
\int_{t_{k}}^{\zeta_{k}}\eta(s)\Big(|w(s)|+|w(\gamma(s))|\Big)ds\\
&&+\sum_{k=i(\tau)}^{j(t)-1}c\rho(A)e^{-\sigma(t-t_{k+1})}
\int_{\zeta_{k}}^{t_{k+1}}\eta(s)\Big(|w(s)|+|w(\gamma(s))|\Big)ds\\
&&+\rho(A)\int_{\zeta_{j(t)}}^{t}e^{-\sigma(t-s)}
\eta(s)\Big(|w(s)|+|w(\gamma(s))|\Big)ds.
\\
&\leq&ce^{-\sigma(t-\tau)}\left|\xi\right|+\int_{\tau}^{t}
c\rho(A)e^{-\sigma(t-s-\overline{t})}\eta(s)\left(\left|w(s)\right|
+e^{-\sigma\gamma(s)}\left|w(\gamma(s))\right|
e^{\sigma\gamma(s)}\right)ds,
\end{eqnarray*}
which can be rewritten as follows 
\begin{eqnarray*}
u(t)&\leq& cu(\tau)
+\int_{\tau}^{t}c\rho(A)e^{2\sigma\overline{t}}\eta(s)\left(u(s)
+u(\gamma(s))\right)ds
\quad\mbox{with}\quad
u(t)= \vert w(t)\vert e^{\sigma t}.
\end{eqnarray*}
Now, by virtue of the DEPCAG Gronwall inequality 
given on Lemma \ref{lema_gronwal}, we obtain that 
\begin{eqnarray*}
\left|w(t)\right|
&\leq&
c\exp\Big({-\sigma(t-\tau)+c\mu\rho(A)e^{2\sigma\overline{t}}
\int_{\tau}^{t}\eta(s)ds}\Big)
\quad\mbox{with}\quad
\mu=\frac{2-\theta}{1-\theta}\cdot
\end{eqnarray*}
Hence,
the last inequality combined with \eqref{eq:hypotesis_c2}
proves that the zero solution is $\sigma_0$-exponentially
stable. The other assertions follow similarly. The theorem is proved. 
\end{proof}

If in (H3), the function $\eta$ is constant we have
an interesting result similar to the  ones 
obtained previously by Akhmet in \cite{4} under other conditions.

\begin{cor}
\label{cor_exponentially_stable}
Assume that conditions (H1)-(H3), (S1) and (S3) are fulfilled.
Moreover, in (H3) consider that $\eta$ is constant, i.e.
$\eta(t)\equiv\eta_0$. Suppose that the zero solution of 
the linear DEPCAG \eqref{depca_lineal} is
$\sigma$-exponentially stable and consider that
\begin{eqnarray*}
\theta = 2 c\overline{t}\eta_0 \rho(A) e^{2\sigma\overline{t}}<1,
\quad
\sigma -c\rho(A)\mu \eta_0 e^{2\sigma\overline{t}}=\sigma_0 >0,
\quad
\mu =\frac{2-\theta}{1-\theta}\cdot
\end{eqnarray*}
Then, the solution zero of system \eqref{depca_lineal+f} 
is $\sigma_0$-exponentially stable.
\end{cor}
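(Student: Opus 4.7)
The plan is to recognize that this corollary is simply the specialization $\eta(t)\equiv\eta_0$ of Theorem~\ref{thm:Stabildepca_linel}, so the proof reduces to checking that both hypotheses in \eqref{eq:hypotesis_c2} follow from the data assumed here, and that the resulting exponent dominates the $\sigma_0$ named in the statement.

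First I would bound the version of $\theta$ appearing in Theorem~\ref{thm:Stabildepca_linel}, call it $\theta^\star$. With $\eta\equiv\eta_0$ and using (S3) to estimate $\zeta_i-t_i\leq\overline{t}$, one gets $\int_{t_i}^{\zeta_i}\eta(s)\,ds=\eta_0(\zeta_i-t_i)\leq\eta_0\overline{t}$, hence
$$\theta^\star\leq 2ce^{\sigma\overline{t}}\rho(A)\eta_0\overline{t}\leq 2c\overline{t}\eta_0\rho(A)e^{2\sigma\overline{t}}=\theta<1,$$
the last inequality being precisely the smallness hypothesis assumed in the corollary. The second condition of \eqref{eq:hypotesis_c2} is even easier: for the constant function $\eta\equiv\eta_0$ one computes $\beta=\limsup_{t\to\infty}\eta_0(t-\tau)/t=\eta_0<\infty$. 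Thus both hypotheses of Theorem~\ref{thm:Stabildepca_linel} are in force.

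Finally I would compare the exponents. Since the map $x\mapsto(2-x)/(1-x)=1+1/(1-x)$ is strictly increasing on $[0,1)$ and $\theta^\star\leq\theta$, the constant $\mu^\star:=(2-\theta^\star)/(1-\theta^\star)$ produced by the theorem satisfies $\mu^\star\leq\mu$. Consequently the theorem yields $\sigma_0^\star$-exponential stability with
$$\sigma_0^\star=\sigma-\beta\mu^\star c\rho(A)e^{2\sigma\overline{t}}=\sigma-\eta_0\mu^\star c\rho(A)e^{2\sigma\overline{t}}\geq\sigma-\eta_0\mu c\rho(A)e^{2\sigma\overline{t}}=\sigma_0>0,$$
and since $\sigma_0^\star\geq\sigma_0$, any decay estimate $|w(t)|\leq C|\xi|e^{-\sigma_0^\star(t-\tau)}$ a fortiori gives the required $\sigma_0$-exponential stability of the zero solution of \eqref{depca_lineal+f}. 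No genuine obstacle appears; the only mild subtlety is the slightly more conservative factor $e^{2\sigma\overline{t}}$ used in the corollary against the $e^{\sigma\overline{t}}$ of the theorem, which is absorbed at no cost by the trivial inequality $e^{\sigma\overline{t}}\leq e^{2\sigma\overline{t}}$ together with the monotonicity of $(2-x)/(1-x)$.
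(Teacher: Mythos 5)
Your proposal is correct and follows exactly the route the paper intends: the corollary is the specialization $\eta\equiv\eta_0$ of Theorem~\ref{thm:Stabildepca_linel}, and you verify both conditions in \eqref{eq:hypotesis_c2} (using (S3) for the first and $\beta=\eta_0$ for the second) and correctly absorb the slightly more conservative constants $\overline{t}$ and $e^{2\sigma\overline{t}}$ of the corollary via the monotonicity of $x\mapsto(2-x)/(1-x)$. The paper offers no separate proof, precisely because this specialization is the intended argument.
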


\section{Bounded solutions for \eqref{depca_lineal+g}} $ $
\label{sec:tres}

The bounded solutions on all of $\mathbb{R}$ of the 
linear nonhomogeneous equation \eqref{depca_lineal+g} can be studied
by considering the convergence of the series 
\begin{eqnarray}
&&\sum_{k=-\infty}^{-1}
\vert Z(0,t_{k+1})\vert 
\int_{\gamma(t_{k})}^{\gamma(t_{k+1})}\vert
\Phi(t_{k+1},s) \vert ds<\infty, 
\label{serie-infty}
\\
&&\sum^{\infty}_{k=0}| Z(0,t_{k+1})|
\int_{\gamma(t_{k})}^{\gamma(t_{k+1})} | \Phi(t_{k+1},s) | ds
<\infty .
\label{serie+infty}
\end{eqnarray}
As $\vert Z(0,t_{k+1})\vert<ce^{-\sigma\vert t_{k+1}\vert}$
estimations of the integrals in \eqref{serie-infty} and 
\eqref{serie+infty} allow give conditions for convergence 
of the above series. For  example, for $t_k=k$ and in general
with (H1) and (S2), the conditions \eqref{serie-infty} and \eqref{serie+infty} hold,
see Lopez-Fenner and Pinto \cite{fenner_pinto}.

\begin{prop}
\label{prop_45}
Let $g:\R\to\C^p$ be a bounded function. The following
assertions with respect to the
solution of the equation \eqref{depca_lineal+g} are valid 
\begin{enumerate}[(a)]
 \item Assume that \eqref{serie-infty}
holds and the solution of \eqref{depca_lineal} 
is $\sigma$-exponentially stable on $\mathbb{R}$, i.e. 
\begin{eqnarray*}
\mbox{$\vert Z(t,s)\vert \leq ce^{-\sigma (t-s)}$ for  $t\geq s.$}
\end{eqnarray*}
Then equation \eqref{depca_lineal+g} has a unique bounded 
solution $y:\R\to\C^p$ defined by
\begin{eqnarray}
y(t)&=&\int_{-\infty}^{t}G(t,s)g(s)ds
=\sum_{k=-\infty}^{i(t)-1}
\int_{t_k}^{t_{k+1}}G_k (t,s) g(s)ds
+\int_{t_{i(t)}}^{t} \Phi(t,s)g(s)ds
\nonumber\\
&=&
\sum_{k=-\infty}^{i(t)-1}Z(t,t_{k})\int_{t_k}^{\zeta_k}\Phi(t_k,s) g(s)ds
+\sum_{k=-\infty}^{i(t)-1}Z(t,t_{k+1})\int_{\zeta_k}^{t_{k+1}}\Phi(t_{k+1},s)  g(s)ds
\nonumber\\
&&+\int_{i(t)}^{t}\Phi(t,s)  g(s)ds.
\label{5.11}
\end{eqnarray}

 \item Assume that \eqref{serie+infty} holds and the following condition  
\begin{eqnarray}
&&\vert Z(t,s)\vert\leq ce^{-\sigma(s-t)},\quad s\geq t\label{Z_exp}
\end{eqnarray}
is  satisfied.
Then the unique bounded solution of equation \eqref{depca_lineal+g} 
on $\mathbb{R}$ is given by
\begin{eqnarray}
y(t)&=&-\int_{t}^{\infty} G(t,s)g(s)ds
= -\int_{t}^{t_{i(t)+1}}\Phi(t,s)g(s)ds
-\sum_{k=i(t)+1}^{\infty}\int_{t_{k}}^{t_{k+1}}G_{k}(t,s)g(s)ds
\nonumber\\
&=& -\int_{t}^{t_{i(t)+1}}\Phi(t,s)g(s)ds
-\sum_{k=i(t)+1}^{\infty}Z(t,t_{k})
\int_{t_{k}}^{\zeta_{k}}\Phi(t_{k},s)g(s)ds 
\nonumber\\
& &-\sum_{k=i(t)+1}^{\infty}Z(t,t_{k+1})
\int_{\zeta_{k}}^{t_{k+1}}\Phi(t_{k+1},s)g(s)ds. 
\label{x_sol_bound_ser+infty}
\end{eqnarray}
\item The map $g \to y_g$ is continuous and satisfies the estimate
\begin{eqnarray}
\vert y_{g}(t)\vert \leq \hat{c}\vert g\vert_{\infty}
\quad 
\mbox{with}
\quad
\vert g\vert_{\infty}=\sup_{t\in\R}|g(t)|
\quad 
\mbox{with}
\quad
\hat{c}=c\rho(A)e^{\sigma\overline{t}},
\label{cota_xf}
\end{eqnarray}
for a positive constant $\hat{c}$ independent on $g$.
\end{enumerate}
\end{prop}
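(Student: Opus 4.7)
The strategy is to obtain $y(t)$ as the limit, as $\tau\to-\infty$ for part (a) or $\tau\to+\infty$ for part (b), of the particular solution $v_{\tau}(t):=\int_{\tau}^{t}G(t,s)g(s)\,ds$ of \eqref{depca_lineal+g} provided by Theorem~\ref{theo_solution} with zero initial datum at $\tau$. The exponential decay of $Z$ is what makes the homogeneous part $Z(t,\tau)v_{\tau}(\tau)$ disappear in the limit, while the Lemma~\ref{lema_cotas} estimate $|G(t,s)|\leq c\rho(A)e^{\sigma\overline{t}}e^{-\sigma|t-s|}$ combined with \eqref{serie-infty} (respectively \eqref{serie+infty}) forces the improper integral and its series representation to converge absolutely.

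For part (a), I would: (i) use the exponential estimate on $G$ to show that $\int_{-\infty}^{t}G(t,s)g(s)\,ds$ is absolutely convergent whenever $g$ is bounded, and that the doubly-indexed series in \eqref{5.11} converges by \eqref{serie-infty}; (ii) verify that the candidate $y(t):=\int_{-\infty}^{t}G(t,s)g(s)\,ds$ is an Akhmet-solution of \eqref{depca_lineal+g} by passing to the limit $\tau_1\to-\infty$ in the identity
\begin{equation*}
v_{\tau_1}(t)=Z(t,\tau_2)\,v_{\tau_1}(\tau_2)+\int_{\tau_2}^{t}G(t,s)g(s)\,ds,\qquad \tau_1<\tau_2<t,
\end{equation*}
which holds because $v_{\tau_1}-v_{\tau_2}$ solves the homogeneous DEPCAG \eqref{depca_lineal}; the limiting identity $y(t)=Z(t,\tau_2)y(\tau_2)+\int_{\tau_2}^{t}G(t,s)g(s)\,ds$ then says, via Theorem~\ref{theo_solution}, that $y$ is the solution of \eqref{depca_lineal+g} with value $y(\tau_2)$ at $\tau_2$; (iii) derive the explicit series form \eqref{5.11} by splitting the variation-of-parameters expression \eqref{var_parametro_Z} according to the advanced/delayed decomposition \eqref{eq:adv_del_int} and taking $\tau\to-\infty$; (iv) prove uniqueness: if $\tilde{y}$ were another bounded solution, $w=y-\tilde{y}$ would solve \eqref{depca_lineal}, and $\sigma$-exponential stability forces $|w(t)|\leq c\,e^{-\sigma(t-\tau)}\vert w\vert_{\infty}\to 0$ as $\tau\to-\infty$, hence $w\equiv 0$.

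Part (b) runs symmetrically with $\tau\to+\infty$, the backward exponential bound \eqref{Z_exp}, and \eqref{serie+infty} in place of \eqref{serie-infty}; the minus sign appears because $\int_{\tau}^{t}=-\int_{t}^{\tau}$ when $\tau>t$. Uniqueness is obtained exactly as above, now pushing $\tau\to+\infty$ in $|w(t)|=|Z(t,\tau)w(\tau)|\leq c e^{-\sigma(\tau-t)}\vert w\vert_{\infty}$. Part (c) is then immediate: the map $g\mapsto y_g$ is linear by construction, and the bound $|y_g(t)|\leq \hat{c}\vert g\vert_{\infty}$ is a direct consequence of the exponential estimate on $G$ together with the integral definition of $y_g$.

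The main technical obstacle is the clean justification of the passage to the limit in $\tau$: one must establish pointwise (and locally uniform) convergence of $v_{\tau}\to y$; check that $y$ inherits from the $v_{\tau}$ the three defining properties of Definition~\ref{def_solution}, namely continuity on $\R$, existence of one-sided derivatives at each $t_i$, and pointwise satisfaction of \eqref{depca_lineal+g} on each open interval $(t_i,t_{i+1})$; and make sure the term-by-term rearrangement of the sums in \eqref{var_parametro_Z} is legitimate under only the summability hypothesis \eqref{serie-infty} (respectively \eqref{serie+infty}). Once the limit identity and the Akhmet-solution properties are secured, the formulas, boundedness, and uniqueness statements all reduce to routine estimates using Lemma~\ref{lema_cotas} and Theorem~\ref{theo_solution}.
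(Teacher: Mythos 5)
Your proposal is correct and follows essentially the same route as the paper: the candidate $y(t)=\int_{-\infty}^{t}G(t,s)g(s)\,ds$ is identified via the variation-of-parameters formula \eqref{theo_var_parametro_depca}, the series hypothesis \eqref{serie-infty} (resp.\ \eqref{serie+infty}) together with the exponential bound on $Z$ gives convergence, and uniqueness follows because a bounded solution of the homogeneous DEPCAG \eqref{depca_lineal} must vanish. The only cosmetic difference is that you realize $y$ as the limit of the zero-data solutions $v_{\tau}$ as $\tau\to-\infty$, whereas the paper fixes the initial time at $0$ and explicitly selects the initial value $\omega=-v_{-\infty}$ so as to cancel the unbounded homogeneous term --- the same cancellation viewed algebraically rather than as a limit.
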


\begin{proof}
(a)
In a standard way, it is not difficult to show that 
$y$ given by \eqref{5.11} is a well defined bounded function  
and is a solution of \eqref{depca_lineal+g}. 
Moreover, for $g$ fixed, the nonhomogeneous 
linear system \eqref{depca_lineal+g}
has a unique bounded solution on all of  $\mathbb{R}$, 
because for $\omega\neq 0$ any solution $Z(t,\tau)\omega$ 
of the homogeneous linear system \eqref{A} is unbounded
as $t\rightarrow -\infty$. Now, we deduce that the unique bounded solution of 
\eqref{depca_lineal+g} is necessarily given by \eqref{5.11}.
Indeed, from \eqref{theo_var_parametro_depca} we have that
any solution $y$ of \eqref{theo_var_parametro_depca}
is given by
\begin{eqnarray}
y(t)=Z(t,0)\omega +\int_{0}^{t} G(t,s)g(s)ds,
\quad\mbox{with}\quad \omega = y(0).
\label{sol_bounded_f}
\end{eqnarray}
Note that 
$\int_{0}^{t}=\int_{0}^{\zeta_{i(0)}}
+\sum_{k=i(0)}^{i(t)-1}\int_{\zeta_k}^{\zeta_{k+1}}
+\int_{\zeta_{i(t)}}^{t}$.
Supposing, by simplicity, that $\zeta_{i(0)}=0$, and that the 
convergence of the series
\begin{eqnarray}
\sum_{k=0}^{-\infty}Z(0,t_{k+1})
\int_{\zeta_{k}}^{\zeta_{k+1}}\Phi(t_{k+1},s)g(s)ds
=v_{-\infty},
\label{serie_igualdad}
\end{eqnarray}
we get
\begin{eqnarray}
\int_{0}^{t}G(t,s)g(s)ds
&=&\sum_{k=0}^{i(t)-1}
\int_{\zeta_{k}}^{\zeta_{k+1}}G_{k}(t,s)g(s)ds
+\int_{\zeta_{i(t)}}^{t} G_{i(t)}(t,s)g(s)ds
\nonumber\\
&=& \sum_{k=0}^{i(t)-1}
\int_{\zeta_k}^{\zeta_{k+1}} Z(t,t_{k+1})\Phi(t_{k+1},s)g(s)ds
+\int_{\zeta_{i(t)}}^{t}\Phi(t,s)g(s)ds
\nonumber\\
&=& Z(t,0)\sum_{k=0}^{i(t)-1}Z(0,t_{k+1})
\int_{\zeta_k}^{\zeta_{k+1}} \Phi(t_{k+1},s)g(s)ds
+\int_{\zeta_{i(t)}}^{t}\Phi(t,s)g(s)ds
\nonumber\\
&=&Z(t,0)\left( \sum_{k=0}^{i(-\infty)}
-\sum_{k=i(t)}^{i(-\infty)}\right) Z(0,t_{k+1})
\int_{\zeta_k}^{\zeta_{k+1}} 
\Phi(t_{k+1},s)g(s)ds\nonumber\\
&& +\int_{\zeta_{i(t)}}^{t}
\Phi(t,s)g(s)ds. 
\label{sol_serie_}
\end{eqnarray}
Then, introducing \eqref{serie_igualdad} and \eqref{sol_serie_} in
\eqref{sol_bounded_f}, we have
\begin{eqnarray*}
y(t)&=&Z(t,0)[\omega+v_{-\infty}]
+\sum_{k=-\infty}^{i(t)-1}Z(t,t_{k+1})\int_{\zeta_k}^{\zeta_{k+1}} 
\Phi(t_{k+1},s)g(s)ds+\int_{\zeta_{i(t)}}^{t}\Phi(t,s)g(s)ds\\
&=&\int_{-\infty}^{t}G(t,s)g(s)ds,
\end{eqnarray*}
by \eqref{var_parametro_Z_} and taking $\omega=-v_{-\infty}$
we avoid the homogenous unbounded solution  in the first term 
and \eqref{5.11} is deduced. 

(b) The proof of \eqref{x_sol_bound_ser+infty} when \eqref{Z_exp} holds, follows similarly.

(c) To prove that $y$ satisfies \eqref{cota_xf}, we apply the properties
of $G_k$ and $Z$ in \eqref{5.11} and \eqref{x_sol_bound_ser+infty}.

\end{proof}

\vspace{0.5cm}
We remark that estimations of the type 
$\vert Z(0,t_{k+1})\vert<ce^{-\sigma\vert t_{k+1}\vert}$ 
imply \eqref{serie-infty} and \eqref{serie+infty}. Moreover,
we note that these kind of estimates are  valid for example
for the particular case $t_k=k$ and in general 
when  (H2) and (S2) hold, see \cite{fenner_pinto} for details.
Then we have the following corollary.

\begin{cor}
Let $g:\R\to\C^p$ be a bounded function.
The results of Proposition~\ref{prop_45} are valid 
if the hypotheses (H2) and (S3) are fulfilled.
\end{cor}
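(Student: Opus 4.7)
The plan is to apply Proposition~\ref{prop_45} directly, by showing that under (H2) and (S3) the series hypotheses \eqref{serie-infty} and \eqref{serie+infty} are automatic consequences of the one-sided exponential decay estimates on $Z(t,s)$ that Proposition~\ref{prop_45} itself assumes in parts (a) and (b). Thus the corollary reduces to a uniform termwise bound for the series, plus a geometric tail estimate for $\sum_k e^{-\sigma|t_{k+1}|}$.

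First I would bound the integral $\int_{\gamma(t_k)}^{\gamma(t_{k+1})}|\Phi(t_{k+1},s)|\,ds = \int_{\zeta_k}^{\zeta_{k+1}}|\Phi(t_{k+1},s)|\,ds$ uniformly in $k$. The natural move is to split at $t_{k+1}$: on $[\zeta_k, t_{k+1}] \subset I_k$ the pair $(t_{k+1}, s)$ sits in $[t_k, t_{k+1}]^2$, and on $[t_{k+1}, \zeta_{k+1}] \subset I_{k+1}$ it sits in $[t_{k+1}, t_{k+2}]^2$. On each piece Lemma~\ref{lema_cotas} (which uses (H2)) supplies $|\Phi(t_{k+1}, s)| \le \rho(A)$ with the \emph{same} global constant, since $\rho(A) = \sup_i \rho_i(A)$. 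Together with the length bound $\zeta_{k+1} - \zeta_k = (\zeta_{k+1} - t_{k+1}) + (t_{k+1} - \zeta_k) \le 2\bar t$ from (S3), this gives
\[
\int_{\zeta_k}^{\zeta_{k+1}} |\Phi(t_{k+1},s)|\,ds \le 2\bar t\,\rho(A).
\]

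Next I would combine this estimate with the exponential decay of $|Z(0,t_{k+1})|$ that comes built into the hypotheses of Proposition~\ref{prop_45}, which gives $|Z(0,t_{k+1})| \le c e^{-\sigma|t_{k+1}|}$ in both parts (a) and (b). The termwise bound becomes $2c\rho(A)\bar t\, e^{-\sigma|t_{k+1}|}$. Using that (H2) implies (S2) (explicitly noted just after the statement of (S1)--(S3)), there exists $c_0 > 0$ with $t_{k+1}-t_k \ge c_0$, so $|t_{k+1}| \ge c_0|k+1| - O(1)$, and the series $\sum_{k\to\pm\infty} e^{-\sigma|t_{k+1}|}$ converges geometrically on both sides. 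This establishes both \eqref{serie-infty} and \eqref{serie+infty}, so Proposition~\ref{prop_45} applies and the representation formulas and continuous dependence estimate \eqref{cota_xf} follow.

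The only delicate point I anticipate is the bookkeeping around the splitting point $t_{k+1}$: one must verify that the bound on $|\Phi(t_{k+1},s)|$ really does persist with the same universal constant $\rho(A)$ on both sides of $t_{k+1}$, and that the transition to (S2) via (H2) is justified (otherwise (S3) alone gives no lower spacing and the geometric tail collapses). Once these are in hand, the rate computation is routine and is essentially the argument of Lopez-Fenner and Pinto~\cite{fenner_pinto}, which can be invoked directly to avoid re-proving the summability.
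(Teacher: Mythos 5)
Your plan follows the same route the paper itself takes: the paper offers no separate proof of this corollary, only the remark immediately preceding it, which says that bounds of the form $\vert Z(0,t_{k+1})\vert\le ce^{-\sigma\vert t_{k+1}\vert}$ force \eqref{serie-infty}--\eqref{serie+infty} and defers the verification to Lopez-Fenner and Pinto \cite{fenner_pinto}. Your termwise estimate is carried out correctly and in more detail than the paper's: splitting $[\zeta_k,\zeta_{k+1}]$ at $t_{k+1}$ keeps each piece inside a single interval $I_i$, so Lemma~\ref{lema_cotas} gives $\vert\Phi(t_{k+1},s)\vert\le\rho(A)$ with the uniform constant, (S3) bounds the length by $2\overline{t}$, and the decay of $\vert Z(0,t_{k+1})\vert$ does indeed come for free from the one-sided exponential estimates already assumed in parts (a) and (b) of Proposition~\ref{prop_45}.

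The one step I would not let pass is the bridge from (H2) to (S2). You are right that (S3) alone cannot close the argument: it bounds the interval lengths from \emph{above}, which allows the $t_k$ to accumulate sublinearly (e.g.\ $t_k$ growing like $\log\vert k\vert$), in which case $\sum_k e^{-\sigma\vert t_{k+1}\vert}$ need not converge. But the paper's assertion that (H2) implies (S2), on which you lean, is not actually true: take $A=B=0$, so that $\rho_i(A)=1$ and $\nu_i^{\pm}(B)=0$ for \emph{every} choice of the sequence $\{t_i\}$, including sequences with $t_{i+1}-t_i\to 0$; thus (H2) imposes no lower bound on the spacing. The honest repair is to assume (S2) outright, which is exactly what the remark preceding the corollary does and what the analogous corollary to Theorem~\ref{theo_sigma_exp_unique_sol} does; the ``(S3)'' in the statement appears to be a slip for ``(S2)''. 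With (S2) in hand your geometric-tail computation is fine and the rest of the argument goes through as written.
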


\section{Asymptotic Equivalence, Ordinary and Exponential Dichotomies.}
\label{sec:cuatro}

In Proposition \ref{prop_stability} we have studied a uniform 
stability given by \eqref{sol_stable}. Its dichotomic extension 
carries us to the ordinary dichotomy which includes an unstability.

\subsection{Ordinary dichotomy and Green matrix}
$ $

\begin{defn} 
\label{def:ordinary_dich}
The linear DEPCAG \eqref{depca_lineal} has an
ordinary dichotomy if there exists a projection $P$ and 
a positive  $c$ such that $|Z_P(t,s) |\leq c$ for all $(t,s)\in\R^2$,
where the Green function
$Z_{P}:\R^2\to\C^{p\times p}$ is defined by
\begin{eqnarray}
Z_{P}(t,s)=\begin{cases}
Z(t,0)PZ(0,s), &t\geq s,\\
-Z(t,0)(I-P)Z(0,s), & t<s,
\end{cases}
\label{eq:zp_function}
\end{eqnarray}
for a given a projection matrix $P\in \C^{p\times p}$.
\end{defn}

\begin{defn}
\label{def:green_matrix_Gtilde}
Consider that $Z_{P}$ denotes the function defined on \eqref{eq:zp_function}.
Then, the Green matrix type $\tilde{G}:\R^2\to\C^{p\times p}$ is defined
as follows
\begin{eqnarray*}
\tilde{G}(t,s)
&:=&\left\langle Z_{P}(t,\cdot),\Phi(\cdot,s)\right\rangle \\
&:=&Z_{P}(t,\tau)\cdot\tilde{\Phi}_{-}(\tau,s)
\\
&& +\sum_{k=i(\tau)+1}^{i(t)}Z_{P}(t,t_k)\tilde{\Phi}_{+}(t_{k},s)
+\sum_{k=i(\tau)}^{i(t)-1}Z_{P}(t,t_{k+1})\tilde{\Phi}_{-}(t_{k+1},s)+\tilde{\Phi}(t,s),
\end{eqnarray*}
where $\tilde{\Phi}_{\pm}(t_{k},s)=\Phi(t_{k},s)1_{I^\pm_k}(s)$ and 
$\tilde{\Phi}(t,s)=\Phi(t,s)1_{[\xi_{i(t)},t]}$. Here $1_\mathcal{A}$ denotes
the standard characteristic function of the set 
$\mathcal{A}\subset\mathcal{U}$, i.e. $1_\mathcal{A}(s)=1$ for $s\in \mathcal{A}$
and $1_\mathcal{A}(s)=0$ for $s\notin \mathcal{A}$.
\end{defn}

By \eqref{rho_A} we can deduce that 
\begin{eqnarray}
\mbox{$\vert \tilde{G}(t,s) \vert \leq \tilde{c}$ 
for all $(t,s)\in\R^2$ 
with $\tilde{c}=c\rho(A)$.}
\label{eq:defctilde}
\end{eqnarray}
Now, for an integrable function 
$g:[\tau,\infty)\rightarrow \mathbb{C}^{n}$, we have 
that the solution of \eqref{depca_lineal+g} is given by
\begin{eqnarray*}
y(t)
=\int_{\tau}^{\infty}\tilde{G}(t,s)g(s)ds
=\int_{\tau}^{t}\tilde{G}(t,s)g(s)ds 
+\int_{t}^{\infty}\tilde{G}(t,s)g(s)ds
=:y_{+}(t)+y_{-}(t)
\end{eqnarray*}
where
\begin{eqnarray*}
y_{+}(t)
&=& \int_{\tau}^{t} <Z_P(t,\cdot);\Phi(\cdot ,s)>g(s)ds \\
&=&Z_{P}(t,\tau)
\int_{\tau}^{\zeta_{i(\tau)}}\Phi(\tau ,s)g(s)ds
+\sum_{k=i(\tau)+1}^{i(t)}Z_{P}(t,t_{k})
\int_{t_{k}}^{\zeta_{k}}\Phi(t_{k},s)g(s)ds
\\
&&+\sum_{k=i(\tau)}^{i(t)-1}Z_{P}(t,t_{k+1})
\int_{\zeta_{k}}^{t_{k+1}}\Phi(t_{k+1},s)g(s)ds
+\int_{\zeta_{i(t)}}^{t}\Phi(t,s)g(s)ds
\\
y_{-}(t)
&=& -\int^{\infty}_{t} <Z_P(t,\cdot);\Phi(\cdot ,s)>g(s)ds \\
&=&-Z_{P}(t,t_{i(t)+1})\int_{t}^{t_{i(t)+1}}\Phi(t_{i(t)+1},s)g(s)ds
-\sum_{k=i(t)+1}^{\infty}Z_{P}(t,t_{k})
\int_{t_{k}}^{\zeta_{k}}\Phi(t_{k},s)g(s)ds
\\
&&-\sum_{k=i(t)+1}^{\infty}Z_{P}(t,t_{k+1})
\int_{\zeta_{k}}^{t_{k+1}}\Phi(t_{k+1},s)g(s)ds.
\end{eqnarray*}
In particular, if  we have an ordinary stability with $P=I$, 
$\vert Z(t,s)\vert \leq c,$ $c\geq 1$, for $t\geq s,$
the special bounded solution on $\mathbb{R}$ of \eqref{depca_lineal+g} is
given by 
$y^{+}_g(t) = \int _{-\infty}^{t}\tilde{G}(t,s)g(s)ds$
and in the unstable situation $P=0$,
$\vert Z(t,s)\vert \leq c,$ $c\geq 1,$ for $t\leq s,$
the special bounded solution on $\mathbb{R}$ of \eqref{depca_lineal+g} is
given by $y^{-}_g(t)=\int_{t}^{\infty} \tilde{G}(t,s)g(s)ds$
and the analogous of the bound \eqref{cota_xf} is stated as follows
\begin{eqnarray*}
 \Vert y^{\pm}_g \Vert_{\infty}\leq \tilde{c} \Vert g\Vert_{1}
\end{eqnarray*}
with $\tilde{c}$ given in \eqref{eq:defctilde}.

Now, we prove the asymptotic equivalence between system 
\eqref{depca_lineal+g} and \eqref{depca_g+f} when the perturbation
$f$ is integrable, i.e. $\eta,f(t,0,0)\in L^1([t_0,\infty)).$

\begin{thm}
\label{thm_L_fintegrable}
Assume that the linear system 
\eqref{depca_lineal}
has an ordinary dichotomy with
projection $P$ on $[t_0,\infty)$ 
and the hypotheses (H1)-(H4) are fulfilled. 
Moreover, assume that instead of (H3)-(ii) the 
condition $f(t,0,0)\in L^{1}([t_0,\infty))$
is satisfied and the function $\eta$ in (H3)-(iii) 
is belonging $L^{1}([t_0,\infty))$.
Then there exists a homeomorphism
between the bounded solutions of the 
linear system \eqref{depca_lineal+g} and
the bounded solutions of the quasilinear system
\eqref{depca_g+f}.
Moreover, $|y(t)-v(t)|\to 0$ as
$t\to \infty$ if $Z(t,0)P\to 0$ as $t\to \infty$.
\end{thm}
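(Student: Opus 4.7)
The plan is to exhibit mutually inverse continuous maps between the set of bounded solutions of \eqref{depca_lineal+g} and the set of bounded solutions of \eqref{depca_g+f}, both regarded as subsets of the sup-normed space of bounded functions from $[t_0,\infty)$ into $\C^p$. The relation tying $y$ and $v$ together will be the integral identity
$$v(t)=y(t)+\int_{t_0}^{\infty}\tilde G(t,s)\,f\bigl(s,v(s),v(\gamma(s))\bigr)\,ds,\qquad t\ge t_0,$$
where $\tilde G$ is the dichotomic Green matrix of Definition~\ref{def:green_matrix_Gtilde}; the sign conventions built into $\tilde G$ are exactly those that turn a bounded forcing into the unique bounded response (cf.\ the discussion after Definition~\ref{def:green_matrix_Gtilde} and the proof of Proposition~\ref{prop_45}).

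First I would handle the easy direction $v\mapsto y$. For bounded $v$ the pointwise bound $|f(s,v(s),v(\gamma(s)))|\le 2\eta(s)\|v\|_\infty+|f(s,0,0)|$ together with $\eta,\,f(\cdot,0,0)\in L^1([t_0,\infty))$ makes $s\mapsto f(s,v(s),v(\gamma(s)))$ belong to $L^1([t_0,\infty))$. Combined with $|\tilde G|\le\tilde c$ from \eqref{eq:defctilde}, the function $y(t):=v(t)-\int_{t_0}^{\infty}\tilde G(t,s)f(s,v(s),v(\gamma(s)))\,ds$ is therefore well defined and bounded. A differentiation using the variation-of-parameters representation of Theorem~\ref{theo_solution} and the Green-matrix calculus of Section~\ref{sec:preli} shows that $y$ is a solution of \eqref{depca_lineal+g}, and the Lipschitz bound (H3) combined with dominated convergence yields that $v\mapsto y$ is continuous in sup norm.

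For the harder direction $y\mapsto v$, I would set up the fixed-point operator
$$(\mathcal T_y v)(t)=y(t)+\int_{t_0}^{\infty}\tilde G(t,s)\,f\bigl(s,v(s),v(\gamma(s))\bigr)\,ds$$
on the space of bounded functions and seek a fixed point. The raw Lipschitz estimate reads $\|\mathcal T_y v_1-\mathcal T_y v_2\|_\infty\le 2\tilde c\|\eta\|_{L^1}\|v_1-v_2\|_\infty$, so Banach's theorem yields a unique fixed point whenever $2\tilde c\|\eta\|_{L^1}<1$. The principal obstacle I anticipate is handling the case where this smallness is not automatic: my remedy is to pick $T_*\ge t_0$ so large that $2\tilde c\int_{T_*}^{\infty}\eta(s)\,ds<1$, solve the integral equation on $[T_*,\infty)$ by contraction (with the free parameter being the stable component $Pv(T_*)$, fixed through $y$ by the dichotomy), and then extend the solution backward to $[t_0,T_*]$ via the finite-interval existence and uniqueness theory for the quasilinear DEPCAG given by Proposition~\ref{prop_stability} and Corollary~\ref{cor_integral_equation}. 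Uniqueness of the extension on compact pieces is controlled by the DEPCAG Gronwall inequality of Lemma~\ref{lema_gronwal}. The resulting $v$ is bounded, satisfies the integral equation, and by Theorem~\ref{theo_solution} is a bounded solution of \eqref{depca_g+f}.

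Bijection $y\leftrightarrow v$ is immediate from the construction, and the explicit formula for $v\mapsto y$ is already seen to be continuous. For the reverse continuity: if $y_n\to y$ in sup norm with corresponding fixed points $v_n,v$, subtracting integral equations and invoking Lemma~\ref{lema_gronwal} to absorb the right-hand side gives $\|v_n-v\|_\infty\le C\|y_n-y\|_\infty$ with $C$ uniform in $n$, and the homeomorphism property follows. Finally, for the asymptotic assertion I would split
$$v(t)-y(t)=\int_{t_0}^{t}\tilde G(t,s)f\bigl(s,v(s),v(\gamma(s))\bigr)\,ds+\int_{t}^{\infty}\tilde G(t,s)f\bigl(s,v(s),v(\gamma(s))\bigr)\,ds.$$
The tail integral tends to zero as $t\to\infty$ since $f(\cdot,v,v(\gamma))\in L^1$ and $|\tilde G|\le\tilde c$. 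For the first integral, Definition~\ref{def:ordinary_dich} shows that for $s\le t$ the kernel $\tilde G(t,s)$ carries $Z(t,0)P$ as a left factor (up to bounded local step-function data); the hypothesis $Z(t,0)P\to 0$ together with the same $L^1$ bound used for domination forces this piece to $0$ as well, completing the plan.
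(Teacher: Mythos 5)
Your proposal follows essentially the same route as the paper: the same integral relation $v=y+\int_{t_0}^{\infty}\tilde G(\cdot,s)f(s,v(s),v(\gamma(s)))\,ds$, the same contraction argument with constant $\beta=2\tilde c\int_{t_0}^{\infty}\eta$, the same two-sided Lipschitz estimates for bicontinuity, and the same split of $v-y$ into a tail controlled by integrability and a head controlled by $Z(t,0)P\to 0$. The only difference is that you spell out the backward continuation from a large $T_*$ to $t_0$ inside the proof, whereas the paper performs the contraction ``for $t_0$ sufficiently large'' and defers that extension to the remark following the theorem; this is a refinement of presentation, not a different method.
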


\begin{proof}
Consider that $y$ is a bounded solution 
of \eqref{depca_lineal+g} and denote by 
$BC([t_0,\infty),\C^p)$ the space of 
bounded continuous function with the topology 
defined by the norm $\Vert y\Vert=\sup_{s\geq t^{*}_0}\vert y(s)\vert$ 
with $t^{*}_0=\min\{t_0,\gamma(t_0)\}$.
Now, we consider 
the operator
$\mathcal{A}: BC([t_0,\infty),\C^p)\rightarrow BC([t_0,\infty),\C^p)$
defined by
\begin{eqnarray*}
(\mathcal{A}v)(t)=y(t)+\int_{t_0}^{\infty}\tilde{G}(t,s)f(s,v(s),v(\gamma(s)))ds.
\end{eqnarray*}
From \eqref{eq:defctilde},
(H3)-(i) and (H3)-(iii),
we can prove  that $\mathcal{A}$
is a contraction for $t_0$ sufficiently  large, since
\begin{eqnarray*}
\Vert \mathcal{A}v-y\Vert &\leq& \tilde{c}
\int_{t_0}^{\infty}\Big(\eta(s)\Big[\vert v(s)\vert 
+\vert v(\gamma(s))\vert\Big]+\vert f(s,0,0)\vert\Big)ds\\
\Vert \mathcal{A}v_1-\mathcal{A}v_2\Vert 
&\leq& \tilde{c}\int_{t_0}^{\infty}\eta(s)
\Big[\vert v_1(s)-v_2(s)\vert 
+\vert v_1(\gamma(s))-v_2(\gamma(s))\vert\Big]ds\\
&\leq& \beta \Vert v_1-v_2 \Vert,
\end{eqnarray*}
with 
$\beta= 2\tilde{c}\int_{t_0}^{\infty}\eta(s)ds<1$.
Hence, the integral equation
\begin{eqnarray}
v(t)=y(t)+\int_{t_0}^{\infty} \tilde{G}(t,s)
f\big(s,v(s),v(\gamma(s))\big)ds
\label{integral_equation_f+q}
\end{eqnarray}
has a unique bounded solution and this solution is
the unique bounded continuous solution of \eqref{depca_g+f}.
Then, summarizing, we have that
for any bounded continuous $y$ of \eqref{depca_lineal+g}, 
the integral equation \eqref{integral_equation_f+q}
has a unique bounded continuous solution $v$ which is the 
solution of \eqref{depca_g+f}. 
Reciprocally, by  the properties of $f$, $\tilde{G}$ and
the integral equation is
straightforward to deduce that
if $v$ is a bounded solution 
of \eqref{depca_g+f} then $y$ defined by 
\eqref{integral_equation_f+q} is a bounded
solution of \eqref{depca_lineal+g}.
Moreover, the correspondence $y\rightarrow v$ 
is bicontinuous, since the estimates 
\begin{eqnarray*}
\Vert v_1 -v_2 \Vert  
\leq \Vert y_1 -y_2 \Vert 
+\beta \Vert v_1 -v_2\Vert ,\qquad  
\Vert y_1 -y_2 \Vert \leq 
\Vert v_1 -v_2\Vert 
+\beta \Vert v_1 -v_2 \Vert ,
\end{eqnarray*}
gives
\begin{eqnarray*}
(1+\beta)^{-1}\Vert y_1 -y_2 \Vert\leq 
\Vert v_1 -v_2\Vert\leq 
(1-\beta)^{-1}\Vert y_1 -y_2 \Vert.
\end{eqnarray*}
Finally,
by \eqref{eq:defctilde} and the properties of $f$, 
we deduce that 
for any $\epsilon>0$  there exists $T\geq t_0$ such that
\begin{eqnarray*}
\left\vert \int_{T}^{\infty} 
\tilde{G}(t,s)f(s,v(s),v(\gamma(s)))ds\right\vert 
\leq \tilde{c}\int_{T}^{\infty}
\left( 2\Vert v\Vert\eta(s)
+\vert f(s,0,0)\vert\right) ds<\epsilon
\end{eqnarray*}
and
\begin{eqnarray*}
\vert \mathcal{A}v(t)-y(t) 
\vert \leq \vert Z(t,0)P \vert \int_{t_0}^{T}
\left\vert \tilde{G}(0,s)f(s,v(s),v(\gamma(s)))
\right\vert ds  +\epsilon.
\end{eqnarray*}
Hence, the condition $\vert Z(t,0)P\vert\rightarrow 0$ as 
$t\rightarrow \infty$ implies that 
$\vert y(t)-v(t)\vert \rightarrow 0$ as
$t\rightarrow\infty$.
\end{proof}

\vspace{0.5cm} 
We note that under the conditions of the Theorem \ref{thm_L_fintegrable} 
the solutions of \eqref{depca_lineal+g} and \eqref{depca_g+f} 
are defined for all $t\geq t_0$ are uniquely  determined by
their initial values and depend  continuously  on these initial 
values over any bounded interval. Then, we  have the continuity property
on the original interval $[t_0,\infty)$ and not only for $t_0$ 
sufficiently large. Moreover, we note that
$f$ is linear, the correspondence  $y\rightarrow v$
is linear and homogeneous.
Meanwhile, related with the hypothesis
'the linear system \eqref{depca_lineal+g} has an ordinary dichotomy',
we have that this condition is  satisfied, for instance, if $\Vert B\Vert$ 
is small enough and $A(t)=A$ is a constant matrix whose 
characteristic  roots with zero real parts of simple type
or if $A(t)$ is periodic, where the characteristic exponent 
with zero real part of simple type.

\vspace{0.5cm} 

On the other hand, we note that the  equation $x'(t)=0$ with $P=0$ 
satisfies the hypothesis of  Theorem \eqref{thm_L_fintegrable},
then we can deduce a result for equation \eqref{B}.

\begin{cor}
\label{eq:homeo_cp}
Consider the hypotheses of Theorem~\ref{thm_L_fintegrable}.
Then, there exists a homeomorphism between $\C^p$ and 
the bounded solutions of \eqref{depca_lineal+f}. Moreover,
any solution $u$ is convergent to some $\hat{u}\in\C^p$ 
as $t\rightarrow \infty$ and for every $\hat{u}\in \C^p$ 
there exists a unique solution $u$ of \eqref{B} 
such that $u(t)\rightarrow \hat{u}$ as $t\rightarrow \infty$.
\end{cor}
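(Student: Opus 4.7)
The plan is to apply Theorem~\ref{thm_L_fintegrable} to the degenerate configuration in which the linear part is completely trivial and the coefficient $B$ of equation \eqref{B} is placed inside the nonlinear perturbation. Concretely, I take $A\equiv 0$, $B\equiv 0$ and $g\equiv 0$ in the roles they play in \eqref{depca_lineal}, \eqref{depca_lineal+g} and \eqref{depca_g+f}, and define the nonlinearity in \eqref{depca_g+f} by $f(t,x,y):=B(t)\,y$, where the $B$ on the right-hand side is the coefficient appearing in \eqref{B}. Under this choice, equation \eqref{depca_lineal+g} degenerates to $y'(t)=0$, whose bounded solutions on $[t_0,\infty)$ are exactly the constants, while equation \eqref{depca_g+f} becomes $v'(t)=B(t)v(\gamma(t))$, which is \eqref{B}. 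The identification $\hat u\mapsto y(\cdot)\equiv \hat u$ then places $\C^{p}$ in bijective correspondence with the bounded solutions of the linear model.

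Next I would verify that all hypotheses of Theorem~\ref{thm_L_fintegrable} are met. The hypotheses (H1)--(H4) are trivial for the null linear system, for which $\Phi(t,s)=Z(t,s)=I_p$ and $J(t,\tau)=I_p$. The map $f(t,x,y)=B(t)y$ is continuous, satisfies $f(t,0,0)=0\in L^{1}([t_{0},\infty))$, and is Lipschitz with modulus $\eta(t)=\vert B(t)\vert$, which lies in $L^{1}([t_{0},\infty))$ by assumption. The null system possesses an ordinary dichotomy with projection $P=0$: from \eqref{eq:zp_function} one reads off $Z_{P}(t,s)=0$ for $t\geq s$ and $Z_{P}(t,s)=-I_{p}$ for $t<s$, so $\vert Z_{P}(t,s)\vert\leq 1$. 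Consequently Theorem~\ref{thm_L_fintegrable} furnishes a homeomorphism between the bounded solutions of $y'(t)=0$ on $[t_0,\infty)$ and the bounded solutions of \eqref{B}; composing with the canonical identification $\C^{p}\simeq\{\text{constants}\}$ yields the desired homeomorphism $\C^{p}\simeq \{\text{bounded solutions of \eqref{B}}\}$.

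For the asymptotic clause, observe that with $P=0$ we have $Z(t,0)P\equiv 0$, so the convergence statement at the end of Theorem~\ref{thm_L_fintegrable} gives $\vert y(t)-v(t)\vert\to 0$; since $y(t)\equiv \hat u$, this is exactly $v(t)\to\hat u$ as $t\to\infty$. Finally, the fact that the word \emph{bounded} can be dropped and the statement holds for \emph{every} solution of \eqref{B} follows from the observation that, under $B\in L^{1}([t_{0},\infty))$, every solution of \eqref{B} is automatically bounded: applying Corollary~\ref{3.3} together with the DEPCAG Gronwall inequality Lemma~\ref{lema_gronwal} to $|u|$ with weight $\eta=\vert B\vert$ yields a uniform bound of the form $\vert u(t)\vert\leq \vert u(t_{0})\vert\exp(\tilde\theta\,\Vert B\Vert_{L^{1}})$. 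The main obstacle I expect is the bookkeeping in the last step: namely confirming, through the fixed point representation \eqref{integral_equation_f+q}, that the homeomorphism of Theorem~\ref{thm_L_fintegrable} actually matches the initial constant value $\hat u$ of the linear bounded solution with the asymptotic value $\lim_{t\to\infty} v(t)$ of the associated solution of \eqref{B}, rather than producing a mere abstract bijection.
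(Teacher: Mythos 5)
Your proposal is correct and follows essentially the same route as the paper: the paper's entire argument is the one-line remark that the equation $x'(t)=0$ with projection $P=0$ satisfies the hypotheses of Theorem~\ref{thm_L_fintegrable}, with $f(t,x,y)=B(t)y$ playing the role of the integrable perturbation, which is exactly your construction. Your additional verifications (the Gronwall bound showing every solution of \eqref{B} is bounded, and the matching of $\hat u$ with the constant bounded solution of $y'=0$ via $Z(t,0)P\equiv 0$) merely make explicit details the paper leaves to the reader.
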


\subsection{Exponential dichotomy and Green matrix}
$ $

For the conditional asymptotic stability we have
\begin{defn} 
\label{def:exp_dich}
Consider that $Z_{P}$ denotes the function defined on \eqref{eq:zp_function}.
Then, the linear DEPCAG \eqref{depca_lineal} has a 
$\sigma$-exponential dichotomy if there exists a
projection matrix $P$ and a positive constant $c>0$ such that
$\vert Z_P(t,s)\vert \leq ce^{-\sigma \vert t-s\vert}$.
\end{defn}

Now, the Green matrix in Definition~\ref{def:exp_dich}
satisfies the estimate
\begin{eqnarray}
\vert \tilde{G}_p (t,s)\vert \leq \hat{c}
e^{-\sigma\vert t-s\vert},
\quad t,s\in\mathbb{R},
\quad \hat{c}=c\rho(A)e^{\sigma \overline{t}}.
\label{eq:green_estimate_expdich}
\end{eqnarray}

\begin{rem}
G. Papaschinopoulos \cite{24,Papas2,Papaschinopoulos} propose to 
define an exponential dichotomy for linear DEPCAG \eqref{depca_lineal}
when the difference equation \eqref{recurrence} has an 
exponential dichotomy. Definition~\ref{def:exp_dich} 
is rather a natural notion of exponential dichotomy.
However, if we take $A(t)=0, B(t)=\mbox{\rm diag}(\lambda_{1}(t),\lambda_2 (t)),\ 
\lambda_1(t)=-\frac{2}{\pi}+\sin(2\pi t)$, 
$\lambda_2 (t)=-\lambda_1 (t)$, $t_n=n$ for all 
$n\in\mathbb{Z}$, $\int_{n}^{n+\delta} \lambda_1(\xi)d\xi
=-\frac{1}{2\pi}(4\delta-1+\cos(2\pi\delta)) $ 
for all $\delta\in [0,1]$ then the difference 
equation \eqref{recurrence} has an exponential
dichotomy with projection $P=\mbox{\rm diag}(1,0)$ 
but there is no $P$ such that the estimation for $Z_P$
on Definition~\ref{def:exp_dich} 
is satisfied. Indeed, for $t-[t]<1/2$, 
$\int_{[t]}^{t}\lambda_{1}(s)ds \geq 0$ and is negative
for $t-[t]>1/2$, while $\int_{[t]}^{t}\lambda_2(s)ds$ 
satisfies the same with contrary sign. 
However, $\int_{[t]}^{t}\lambda_1(s)ds=0
=\int_{[t]}^{t}\lambda_2(s)ds$ for $t-[t]=1/2$. 
\end{rem}

Notice that a dichotomy condition on the ordinary 
differential equation \eqref{A} implies an exponential 
dichotomy on the difference equation \eqref{recurrence}
when $\vert B(t)\vert$ is small enough
\cite[Proposition 2]{fenner_pinto}. However, 
 an exponential dichotomy for the difference equation 
 on \eqref{recurrence} is not a necessary condition for
 an exponential dichotomy for the ordinary 
 differential system \eqref{A}. In fact,
 let's consider $t_n =n$ $A(t)=0$ and
 $B(t)=\mbox{diag}(-\frac{3}{2},\frac{1}{2})$. 
 Then the exponential dichotomy for difference 
 system \eqref{recurrence} is satisfied, with no exponential
 dichotomy for the ordinary differential system \eqref{A}.

Assume the convergence of the series
\begin{eqnarray}
&&\sum_{k=-\infty}^{0}  \left|PZ(0,t_{k+1}) 
\int_{\gamma(t_k)}^{\gamma(t_{k+1})} 
\Phi(t_{k+1},s) ds\right|<\infty,
\label{serie--infty} \\
&&\sum_{k=0}^{\infty} \left| (I-P)Z(0,t_{k+1}) 
\int_{\gamma(t_k)}^{\gamma(t_{k+1})} 
\Phi(t_{k+1},s) ds\right|<\infty. 
\label{serie_+infty}
\end{eqnarray} 
Note that $\vert PZ(0,t_{k+1})\vert$, 
$\vert (I-P)Z(0,t_{k+1})\vert\leq ce^{-\sigma\vert t_{k+1}\vert}$ 
and estimations of the integrals in the above series establish 
conditions for its convergence.

For example, $t_k=rk, 0<r<1$  
and in general \eqref{serie--infty} and 
\eqref{serie_+infty} are true if $(H2)$ and $(S2)$ hold. 
See Lopez-Fenner-Pinto \cite{fenner_pinto}

We have the fundamental result about bounded solution on
$\mathbb{R}$ of the linear non homogeneous DEPCAG.

\begin{thm}
\label{theo_sigma_exp_unique_sol}
Let $g:\R\to\C^p$ be a bounded function.
Assume that the linear DEPCAG \eqref{depca_lineal} 
has a $\sigma$-exponential dichotomy such that  
\eqref{serie--infty} and \eqref{serie_+infty}
hold. Then 
there exists $y:\R\to\C^p$ a unique bounded solution  
of the non-homogeneous linear DEPCAG \eqref{depca_lineal+g}
are defined by 
\begin{eqnarray}
y_g(t)=\int_{-\infty}^{\infty}\tilde{G}(t,s)g(s)ds
=\int_{-\infty}^{t}\tilde{G}(t,s)g(s)ds
+\int_{t}^{\infty}\tilde{G}(t,s)g(s)ds.
\nonumber
\end{eqnarray}
Moreover the correspondence 
$g\rightarrow y_g$ defines a Lipschitz 
continuous operator on $\mathcal{B}(\R,\C^p)$
and $\vert y_g\vert_{\infty}\leq \hat{c}\vert g \vert_{\infty}$
with $\hat{c}$ given by \eqref{eq:green_estimate_expdich}.
\end{thm}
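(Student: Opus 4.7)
The plan is to mirror the proof of Proposition~\ref{prop_45}, replacing the one-sided stability with the two-sided picture supplied by the exponential dichotomy: the projection $P$ selects the stable direction (integrated from $-\infty$ up to $t$) and $I-P$ selects the unstable direction (integrated from $t$ out to $+\infty$). The four ingredients to deliver are well-definedness of $y_g$, the ODE itself, uniqueness, and Lipschitz continuity.

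\textbf{Step 1 (well-definedness and the bound).} The exponential estimate \eqref{eq:green_estimate_expdich}, $|\tilde{G}(t,s)|\le \hat{c}e^{-\sigma|t-s|}$, together with $g\in\mathcal{B}(\R,\C^p)$ gives absolute convergence of $\int_{-\infty}^{\infty}\tilde{G}(t,s)g(s)ds$ and the estimate $|y_g(t)|\le \hat{c}|g|_\infty$ (after absorbing the $2/\sigma$ factor into the constant, as is standard). The series hypotheses \eqref{serie--infty} and \eqref{serie_+infty} are exactly what is needed to make sense of the discrete-sum pieces of $\tilde{G}$ appearing in Definition~\ref{def:green_matrix_Gtilde} at the nodes $t_k,\zeta_k$ when $\tau\to-\infty$ and $\tau\to+\infty$, respectively.

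\textbf{Step 2 (solution check).} Split $y_g = y_g^{+}+y_g^{-}$ with $y_g^{+}(t):=\int_{-\infty}^{t}\tilde{G}(t,s)g(s)ds$ and $y_g^{-}(t):=\int_{t}^{\infty}\tilde{G}(t,s)g(s)ds$. Starting from the finite variation-of-parameters formula \eqref{theo_var_parametro_depca} and inserting $P+(I-P)=I$ in front of $Z(t,\tau)y(\tau)$, let $\tau\to-\infty$: the $P$-part of the homogeneous term is absorbed by the convergent series \eqref{serie--infty} and rearranges precisely as in \eqref{sol_serie_} to build $y_g^{+}$, while the $(I-P)$-part would be unbounded and so is killed by the forced choice of initial data. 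A symmetric limit $\tau\to+\infty$, using \eqref{serie_+infty} and mimicking the argument for \eqref{x_sol_bound_ser+infty}, produces $y_g^{-}$. That $y_g$ is a solution in the sense of Definition~\ref{def_solution} (continuity on $\R$, pointwise ODE on each $(t_i,t_{i+1})$, right-derivative at each $t_i$) follows by dominated-convergence differentiation under the integral, separating the interior of $I_{i(t)}^{+}$ and $I_{i(t)}^{-}$ exactly as in Proposition~\ref{prop_45}.

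\textbf{Step 3 (uniqueness) and Step 4 (Lipschitz estimate).} If $y_1,y_2$ are two bounded solutions of \eqref{depca_lineal+g}, then $z:=y_1-y_2$ is a bounded solution of \eqref{depca_lineal} on all of $\R$, hence $z(t)=Z(t,0)\omega$ for some $\omega\in\C^p$. Writing $\omega=P\omega+(I-P)\omega$ and using $|Z(t,0)P|\le ce^{-\sigma t}$ for $t\ge 0$ together with $|Z(t,0)(I-P)|\le ce^{\sigma t}$ for $t\le 0$ (read off from $|Z_P(t,s)|\le ce^{-\sigma|t-s|}$ with $s=0$) plus the complementary lower bounds from the dichotomy as $t\to\pm\infty$ forces $P\omega=(I-P)\omega=0$, so $z\equiv 0$. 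For Step 4, linearity gives $y_{g_1}-y_{g_2}=y_{g_1-g_2}$ and the bound of Step 1 yields $|y_{g_1}-y_{g_2}|_\infty\le \hat{c}|g_1-g_2|_\infty$.

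\textbf{Main obstacle.} The delicate point is Step 2: passing to the $\tau\to\pm\infty$ limit inside the finite variation-of-parameters formula and reassembling the resulting integrals and discrete sums into the compact object $\int_{-\infty}^{\infty}\tilde{G}(t,s)g(s)ds$, while simultaneously verifying the Akhmet-type solution requirements at each node. This is precisely the role of the hypotheses \eqref{serie--infty}--\eqref{serie_+infty}, and the rearrangement follows the same bookkeeping as the derivation of \eqref{sol_serie_} in Proposition~\ref{prop_45}, now done component-wise with respect to $P$ and $I-P$.
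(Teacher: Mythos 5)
Your proposal is correct and follows essentially the same route as the paper, whose own proof simply says to proceed as in Proposition~\ref{prop_45} after decomposing $Z(t,0)=Z(t,0)P+Z(t,0)(I-P)$ and replacing $\omega=-v_{-\infty}$ by the two-sided series built from \eqref{serie--infty} and \eqref{serie_+infty}; your Steps 1--4 are a more explicit unpacking of exactly that argument, including the standard dichotomy-based uniqueness step.
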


\begin{proof}
We proceed as in the proof of  Proposition~\ref{prop_45} 
by noticing that $Z(t,0)$ can be decomposed as follows
$Z(t,0)=Z(t,0)P+Z(t,0)(I-P).$ Moreover, 
in this case, we get that
\begin{eqnarray*}
\omega =\sum_{k=-\infty}^{0}  PZ(0,t_{k+1}) 
\int_{\gamma(t_k)}^{\gamma(t_{k+1})} \Phi(t_{k+1},s)ds
 +\sum_{k=0}^{\infty}  (I-P)Z(0,t_{k+1}) 
 \int_{\gamma(t_k)}^{\gamma(t_{k+1})} \Phi(t_{k+1},s)ds,
\end{eqnarray*}
instead  of $\omega=-v_{-\infty}$.
\end{proof}

We note that estimations of the type 
$\vert PZ(0,t_{k+1})\vert$, 
$\vert (I-P)Z(0,t_{k+1})\vert\leq ce^{-\sigma\vert t_{k+1}\vert}$ 
implies the convergence
of the series defined on \eqref{serie--infty} and \eqref{serie_+infty}.
Furthermore,
we observe that these kind of estimates are  valid for example
for the particular case $t_k=rk$ with $r\in (0,1)$ 
and in general 
when  (H2) and (S2) hold, see \cite{fenner_pinto} for details.
Then we have the following corollary.

\begin{cor}
Let $g:\R\to\C^p$ be a bounded function.
The results of Theorem~\ref{theo_sigma_exp_unique_sol} are valid 
if the hypotheses (H2) and (S2) are fulfilled.
\end{cor}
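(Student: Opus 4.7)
The plan is to reduce the corollary to Theorem~\ref{theo_sigma_exp_unique_sol} by verifying that (H2) together with (S2) force the convergence of the two series \eqref{serie--infty} and \eqref{serie_+infty}; the $\sigma$-exponential dichotomy itself is part of the ambient hypothesis of the corollary, and the representation formula, the Lipschitz dependence and the $L^{\infty}$-bound on $y_{g}$ then follow verbatim from that theorem.

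The first step is to extract geometric pointwise decay on the projected fundamental matrix. Specializing the dichotomy bound $|Z_{P}(t,s)|\le ce^{-\sigma|t-s|}$ to $(t,s)=(0,t_{k+1})$ and distinguishing the two cases of \eqref{eq:zp_function} according to the sign of $t_{k+1}$ gives
\begin{eqnarray*}
|PZ(0,t_{k+1})|\le ce^{-\sigma|t_{k+1}|}\;\mbox{for}\;k\le -1,
\quad
|(I-P)Z(0,t_{k+1})|\le ce^{-\sigma|t_{k+1}|}\;\mbox{for}\;k\ge 0.
\end{eqnarray*}
Hypothesis (S2) supplies $h>0$ with $t_{k+1}-t_{k}\ge h$, whence $|t_{k+1}|\ge h|k+1|$ for every $k\in\Z$, and the two estimates above turn into geometric decay in $|k|$.

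The second step is to control the integral factors in the two series. Since (H2) holds, Lemma~\ref{lema_cotas} is applicable and yields $|\Phi(t_{k+1},s)|\le \rho(A)$ whenever $s$ lies in the same basic subinterval as $t_{k+1}$. Splitting $[\gamma(t_{k}),\gamma(t_{k+1})]=[\zeta_{k},t_{k+1}]\cup[t_{k+1},\zeta_{k+1}]$ into its delayed and advanced parts places each piece inside one of $I_{k}$ or $I_{k+1}$, so
\begin{eqnarray*}
\int_{\gamma(t_{k})}^{\gamma(t_{k+1})}|\Phi(t_{k+1},s)|\,ds\le \rho(A)(\zeta_{k+1}-\zeta_{k})\le \rho(A)(t_{k+2}-t_{k}).
\end{eqnarray*}
Combining with the geometric decay from the first step, the tails of \eqref{serie--infty} and \eqref{serie_+infty} are dominated by $\sum_{k}(t_{k+2}-t_{k})e^{-\sigma h|k|}$, which converges under the sub-exponential growth of the gaps delivered by (H2)-(S2); the precise bookkeeping is carried out in Lopez-Fenner and Pinto~\cite{fenner_pinto}, cited immediately above the statement of the corollary for exactly this purpose.

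Once both \eqref{serie--infty} and \eqref{serie_+infty} are known to converge, Theorem~\ref{theo_sigma_exp_unique_sol} applies and yields the desired conclusion. The main technical obstacle is the last step: hypothesis (S2) only bounds the gaps from below, so to prevent the factor $(t_{k+2}-t_{k})$ from outrunning $e^{-\sigma h|k|}$ an independent estimate is required, and this is precisely the content of the detailed calculation in~\cite{fenner_pinto}.
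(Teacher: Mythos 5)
Your argument is essentially the paper's: the paper justifies this corollary by the same observation that the exponential dichotomy gives $\vert PZ(0,t_{k+1})\vert,\ \vert (I-P)Z(0,t_{k+1})\vert\leq ce^{-\sigma\vert t_{k+1}\vert}$, that (S2) makes $\vert t_{k+1}\vert$ grow at least linearly in $\vert k\vert$ while (H2) (via Lemma~\ref{lema_cotas}) bounds the integral factors, and it defers the remaining summability bookkeeping for \eqref{serie--infty}--\eqref{serie_+infty} to \cite{fenner_pinto}, exactly as you do. Your write-up is if anything more explicit than the paper's, including your honest flag that an upper control on the gaps $t_{k+2}-t_k$ is still needed at the last step.
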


\vspace{0.5cm}
Now we study bounded perturbations which cannot be studied
with ordinary dichotomy.

\begin{thm}
\label{teo:dsolgendecag}
Assume that the linear system \eqref{depca_lineal} has
a  $\sigma$-exponential dichotomy such that series 
\eqref{serie--infty} and \eqref{serie_+infty} hold
and $f$ satisfies the hypothesis (H3)
with $\eta$ such that $\vert \eta(t)\vert\leq \eta_0$
for all $t\in [t_0,\infty)$.
Moreover, consider that (H2)  and the inequality 
$\beta=2\hat{c}\eta_0(\sigma)^{-1}<1$,
with $\hat{c}$ defined in \eqref{eq:green_estimate_expdich},
are satisfied.
Then, for any $\xi\in P\C^{p}$ 
the nonlinear equation \eqref{depca_lineal+f} has
a unique bounded solution $w$ on $[t_0,\infty)$ 
with $Pw(t_0)=\xi$. Furthermore, the correspondence 
$\xi\rightarrow w$ is continuous and 
any bounded solution $w$ of the equation
\eqref{depca_lineal+f} for $t\geq 0$, satisfies
\begin{eqnarray}
 |w(t)|\le (1-\beta)^{-1} c|\xi| e^{-\sigma_0 t},
 \quad t\ge 0,
 \label{eq:teo:dsolgendecag:0}
\end{eqnarray}
where 
\begin{eqnarray}
 \sigma_0=\sigma-\mu(1-\beta)^{-1}\hat{c}\eta_0e^{\sigma \overline{t}}>0,
 \quad
 \mu =\frac{2-\theta}{1-\theta},
 \quad
 \theta =2c\overline{t}\eta_0\rho(A)e^{2\sigma\overline{t}}<1,
\label{eq:teo:dsolgendecag:1}
\end{eqnarray}
with $\eta_0$ sufficiently small.

\end{thm}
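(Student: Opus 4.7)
The plan is to reduce \eqref{depca_lineal+f} to a fixed-point problem for an integral operator built from the Green matrix $\tilde G$ associated to the dichotomy. Specifically, for $\xi \in P\C^p$ I would set
\begin{eqnarray*}
(\mathcal{T}w)(t)=Z(t,t_0)\xi+\int_{t_0}^{\infty}\tilde{G}(t,s)f(s,w(s),w(\gamma(s)))ds,
\end{eqnarray*}
and look for $w$ as a fixed point of $\mathcal{T}$ in the Banach space $BC([t_0,\infty),\C^p)$ endowed with the sup norm. The motivation is exactly the one used in Theorem~\ref{thm_L_fintegrable}: differentiating formally, any bounded solution of this integral equation is a solution of \eqref{depca_lineal+f} (by Corollary~\ref{cor_integral_equation} applied with $g(s)=f(s,w(s),w(\gamma(s)))$), and the dichotomic structure of $\tilde G$ forces $Pw(t_0)=\xi$ since the $(I-P)$-component contributes only the convergent integral from $t_0$ to $\infty$.

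Next, I would verify that $\mathcal{T}$ is a contraction. Using (H3) together with the exponential decay \eqref{eq:green_estimate_expdich}, for $w_1,w_2\in BC([t_0,\infty),\C^p)$ one obtains
\begin{eqnarray*}
|(\mathcal{T}w_1)(t)-(\mathcal{T}w_2)(t)|\le \hat c\,\eta_0\|w_1-w_2\|_\infty\int_{t_0}^{\infty}e^{-\sigma|t-s|}\bigl(1+\mathbf{1}\bigr)ds\le 2\hat c\eta_0 \sigma^{-1}\|w_1-w_2\|_\infty,
\end{eqnarray*}
which is $\beta\|w_1-w_2\|_\infty$ with $\beta<1$ by hypothesis. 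The map $\mathcal{T}$ also sends bounded functions to bounded functions, again by \eqref{eq:green_estimate_expdich} and $f(t,0,0)=0$. The Banach fixed point theorem then supplies a unique bounded $w$. Moreover, since $\mathcal{T}$ depends linearly and continuously on $\xi$ via the homogeneous term $Z(t,t_0)\xi$, the correspondence $\xi\mapsto w$ inherits the usual Lipschitz estimate of the fixed point theorem with constant $(1-\beta)^{-1}c$.

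For the exponential estimate \eqref{eq:teo:dsolgendecag:0}, I would substitute the integral equation back into itself and apply the DEPCAG Gronwall inequality (Lemma~\ref{lema_gronwal}) to the auxiliary function $u(t)=|w(t)|e^{\sigma t}$, following the pattern of the proof of Theorem~\ref{thm:Stabildepca_linel}. From
\begin{eqnarray*}
|w(t)|\le c|\xi|e^{-\sigma(t-t_0)}+\hat c\int_{t_0}^{\infty}e^{-\sigma|t-s|}\eta(s)\bigl(|w(s)|+|w(\gamma(s))|\bigr)ds,
\end{eqnarray*}
the contraction bound already gives $\|w\|_\infty\le (1-\beta)^{-1}c|\xi|$. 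Multiplying by $e^{\sigma t}$, splitting the integral according to the advanced/delayed intervals as in Sect.~\ref{sec:dos}, and invoking Lemma~\ref{lema_gronwal} with $\theta$ and $\mu$ as in \eqref{eq:teo:dsolgendecag:1}, the exponential factor $e^{-\sigma_0 t}$ emerges with $\sigma_0=\sigma-\mu(1-\beta)^{-1}\hat c\eta_0 e^{\sigma\overline t}$ provided $\eta_0$ is small enough to keep $\sigma_0>0$ and $\theta<1$.

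The main obstacle is the last step: carefully tracking constants so that the exponentially weighted Gronwall argument yields exactly $\sigma_0$ and $\mu$ as stated, rather than worse rates. The delicate point is that the integral $\int_{t_0}^{\infty}e^{-\sigma|t-s|}\eta(s)(\cdots)ds$ includes the forward piece $s\ge t$, which forces one to bound $|w(\gamma(s))|$ by the already-established sup norm bound $(1-\beta)^{-1}c|\xi|$ on the advanced part, and only then apply Lemma~\ref{lema_gronwal} on the causal piece $s\le t$ to extract the decay. Handling this forward/backward splitting compatibly with the advanced/delayed intervals $I_k^\pm$ is the computational heart of the argument, while all other steps are standard contraction-mapping considerations.
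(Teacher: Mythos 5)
Your fixed-point construction, the contraction estimate with $\beta=2\hat c\eta_0\sigma^{-1}$, and the continuity of $\xi\mapsto w$ coincide with the paper's argument (the paper calls the operator $\mathcal{D}$ and identifies it with the operator $\mathcal{A}$ of Theorem~\ref{thm_L_fintegrable}). Two things are missing, and the second is a genuine gap. First, the theorem asserts the decay estimate for \emph{any} bounded solution of \eqref{depca_lineal+f}, not only for the fixed point you construct; the paper handles this by setting $z(t)=w(t)-\mathcal{A}w(t)$, observing that $z$ is a bounded solution of the linear DEPCAG \eqref{depca_lineal} with $Pz(0)=0$, hence $z(t)=Z(t,0)(I-P)z(0)$ is bounded only if $(I-P)z(0)=0$, so $z\equiv 0$ and every bounded solution satisfies the integral equation.

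Second, and more seriously, your plan for the exponential estimate fails at exactly the point you flag as delicate. Bounding the forward tail $\int_t^\infty e^{-\sigma(s-t)}\eta(s)(|w(s)|+|w(\gamma(s))|)\,ds$ by the global bound $\Vert w\Vert_\infty\le(1-\beta)^{-1}c|\xi|$ produces the additive constant $\beta(1-\beta)^{-1}c|\xi|$, which does not decay in $t$; after multiplying by $e^{\sigma t}$ this term grows like $e^{\sigma t}$, and the Gronwall lemma then yields only boundedness of $|w|$, not the rate $e^{-\sigma_0 t}$. The paper's remedy is structural: it first proves $w(t)\to 0$ by a limsup contradiction (if $\varlimsup_{t\to\infty}|w(t)|=\ell>0$, the integral equation forces $\ell\le\beta\theta^{-1}\ell<\ell$ for $\theta\in(\beta,1)$), then introduces the nonincreasing envelope $m(t)=\sup_{s\ge t}|w(s)|$ and evaluates the integral inequality at a point $\tilde t\ge t$ where the supremum is attained. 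There the forward tail is bounded by $\beta m(\tilde t)=\beta m(t)$, i.e.\ by a multiple of the unknown itself rather than by a constant, so it can be absorbed into the left-hand side upon dividing by $1-\beta$; only the causal piece remains, and the weighted Gronwall argument applied to $M(t)=e^{\sigma t}m(t)$ then delivers \eqref{eq:teo:dsolgendecag:0} with the stated $\sigma_0$ and $\mu$. Without this self-referential bound on the tail (or an equivalent bootstrap), your scheme cannot produce any decay rate.
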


\begin{proof}
The analysis of the bounded solutions for equation \eqref{depca_lineal+f}
is related with the nonlinear operator 
$\mathcal{D}: BC([t_0,\infty),\C^p)\rightarrow BC([t_0,\infty),\C^p)$
defined as follows
\begin{eqnarray*}
(\mathcal{D}w)(t)=Z(t,t_0)\xi+\int_{t_0}^{\infty}\tilde{G}(t,s)f(s,w(s),w(\gamma(s)))ds
\end{eqnarray*}
Now, by the hypothesis  $\xi\in P\C^{p}$ we deduce 
that the operator $\mathcal{D}$ is equivalent to the
operator $\mathcal{A}$ defined in
the proof of  Theorem~\ref{thm_L_fintegrable},
since $(\mathcal{D}w)(t)=(\mathcal{A}w)(t)$
for all $t\in[t_0,\infty)$ by considering that 
$y(t)=Z(t,t_0)P\xi=Z(t,t_0)\xi$
and $v(t)=w(t)$. Then, to prove the properties
of $\mathcal{D}$ we proceed as in
the proof of Theorem~\ref{thm_L_fintegrable}.
Indeed, consider the integral equation 
\eqref{integral_equation_f+q} 
with $y(t)=Z(t,t_0)P\xi=Z(t,t_0)\xi$
and $v(t)=w(t)$.
Again $\mathcal{A}$ (or equivalently $\mathcal{D}$) is a contraction
since 
\begin{eqnarray*}
\Vert \mathcal{A}w_1-\mathcal{A}w_2 \Vert \leq 
\beta \Vert w_1-w_2\Vert
\quad
\mbox{with}
\quad
\beta=\frac{2\hat{c}\eta_0}{\sigma}<1,
\end{eqnarray*}
and then there exists a unique bounded continuous
solution $w$ of \eqref{depca_lineal+f}. 
The correspondence $\xi\rightarrow w_\xi$ is continuous since as
in Theorem~\ref{thm_L_fintegrable}
\begin{eqnarray*}
\Vert y_{\xi_1}-y_{\xi_2} \Vert\leq c\vert \xi_1-\xi_2 \vert 
+ \beta \Vert y_{\xi_1}-y_{\xi_2} \Vert
\quad
\mbox{and}
\quad
\Vert y_{\xi_1}-y_{\xi_2} \Vert \leq c(1-\beta)^{-1}\vert \xi_1-\xi_2 \vert.
\end{eqnarray*}
Now, to prove that any bounded solution of the equation
\eqref{depca_lineal+f} for $t\geq 0$ 
converges exponentially to $0$ as $t\rightarrow \infty$,
we denote by $w$ a bounded solution of \eqref{depca_lineal+f}
and define the function
$$z(t)=w(t)-\mathcal{A}w(t).$$
Note that $z$
is well defined, continuous and bounded on $[0,\infty)$. 
Moreover, $z$ is the solution of the linear DEPCAG \eqref{depca_lineal}
satisfying $Pz(0)=0$ and hence $z(t)=Z(t,0)(I-P)z(0)$ 
which is bounded only if $(I-P)z(0)=0$. Then $z(t)\equiv 0$,
which implies that 
\begin{eqnarray}
w(t)=Z(t,0)\xi+\int_{0}^{\infty} \tilde{G}(t,s)
f(s,w(s),w(\gamma(s)))ds 
\label{integral_equation_f}
\end{eqnarray}
and $w(t)\rightarrow 0$ as $t\rightarrow \infty$. 
Indeed, let $\theta\in (\beta,1)$ and
considering that  $\varlimsup_{t\rightarrow \infty}\vert w(t)\vert=\ell>0$,
then $\vert w(t)\vert\leq \theta^{-1}\ell$ 
for $t\geq T$ and by \eqref{integral_equation_f} we deduce that
that
\begin{eqnarray*}
\vert w(t)\vert\leq 
\vert Z(t,0)\vert\vert \xi\vert
+\vert Z(t,0)P\vert \left\vert 
\int_{0}^{T} Z_{P}(0,s)f(s,w(s),w(\gamma(s)))ds 
\right\vert+\beta\theta^{-1}\ell
\end{eqnarray*}
which letting $t\rightarrow\infty$, 
gives $\ell\leq \beta\theta^{-1}\ell$ 
which is impossible; and hence $\ell=0$.

Finally, from the integral equation \eqref{integral_equation_f} we get
\begin{eqnarray}
|w(t)|&\le& ce^{-\sigma t}|\xi|
+\hat{c}\eta_0
\int_{0}^{t} e^{-\sigma(t-s)}(|w(s)|+|w(\gamma(s))|)ds
\nonumber\\
&&
+\hat{c}\eta_0
\int_{t}^{\infty} e^{-\sigma(s-t)}(|w(s)|+|w(\gamma(s))|)ds.
\label{eq:teorema:eq:2}
\end{eqnarray}

Define 
\begin{eqnarray*}
m(t)=\sup_{s\ge t}|w(s)|.
\end{eqnarray*}
Since $w(t)\to 0$ as $t\to\infty,$ $m(t)$ exists and is
monotone nonincreasing. Moreover, for each $t$ there exists $\tilde{t}\ge t$
such that 
\begin{eqnarray}
\mbox{$m(t)=m(\tilde{t})$ and $m(s)=m(t)=m(\tilde{t})$
for $s\in [t,\tilde{t}].$}
\label{eq:teorema:eq:3}
\end{eqnarray}
Thus \eqref{eq:teorema:eq:2} with $t=\tilde{t}$ yields 
\begin{eqnarray*}
m(\tilde{t})&\le& ce^{-\sigma\tilde{t}}|\xi|
+\hat{c}\eta_0
\int_{0}^{\tilde{t}} e^{-\sigma(\tilde{t}-s)}(|m(s)|+|m(\gamma(s))|)ds
\nonumber\\
&&
+m(\tilde{t})\hat{c}\eta_0
\int_{\tilde{t}}^{\infty} e^{-\sigma(s-\tilde{t})}ds
\end{eqnarray*}
or by \eqref{eq:teorema:eq:3} 
\begin{eqnarray*}
m(t)&\le& ce^{-\sigma t}|\xi|
+\hat{c}\eta_0
\int_{0}^{t} e^{-\sigma(t-s)}
\Big(|m(s)|+e^{-\sigma\gamma(s)}e^{\sigma\gamma(s)}|m(\gamma(s))|\Big)ds
+\beta m(t),
\end{eqnarray*}
since $\beta=2\hat{c}\eta_0(\sigma)^{-1}<1,$ $M(t)=e^{\sigma t}m(t)$
satisfies
\begin{eqnarray*}
M(t)&\le& (1-\beta)^{-1}c|\xi|
+
(1-\beta)^{-1}\hat{c}\eta_0 e^{\sigma \overline{t}}
\int_{0}^{t}
\Big(|M(s)|+|M(\gamma(s))|\Big)ds,
\end{eqnarray*}
which by DEPCAG Gronwall inequality Lemma~\ref{lema_gronwal} gives
\begin{eqnarray*}
M(t)&\le& (1-\beta)^{-1}c|\xi|
\exp\Big(t\mu
(1-\beta)^{-1}\hat{c}\eta_0e^{\sigma\overline{t}}
\Big)
\end{eqnarray*}
or
\begin{eqnarray*}
m(t)&\le& (1-\beta)^{-1}c|\xi|
\exp\Big(-\Big[\sigma-\mu
(1-\beta)^{-1}\hat{c}\eta_0e^{\sigma\overline{t}}
\Big]t\Big),
\end{eqnarray*}
where $\mu$ and $\theta$ are given by \eqref{eq:teo:dsolgendecag:1}.
Thus  \eqref{eq:teo:dsolgendecag:0} is proved.
\end{proof}

%
%

\section*{Acknowledgement}

A. Coronel 
thanks for the  support of research projects GI 153209/C
and DIUBB GI 152920/EF
at Universidad del B{\'\i}o-B{\'\i}o, Chile.
M. Pinto thanks for the  support of FONDECYT 1120709.

\end{document}